\newcommand{\uM}{\bar{M}}
\newcommand{\Uad}{U_{\text{ad}}}
\newcommand{\Vad}{V_{\text{ad}}}
\newcommand{\Lcal}{\mathcal{L}}
\newcommand{\Gcal}{\mathcal{G}}
\newcommand{\Scal}{\mathcal{S}}
\renewcommand{\:}{\mathcal{\colon}}
\newcommand{\NN}{\mathbb{N}}
\newcommand{\RR}{\mathbb{R}}
\DeclareMathOperator{\esssup}{ess\,sup}
\DeclareMathOperator{\cco}{\overline{co}}
\DeclareMathOperator{\cvto}{\curvearrowright}
\newcommand{\rel}{\text{rel}}
\newcommand{\pw}{\text{pw}}
\newcommand{\Id}{\mathrm{Id}}
\newcommand{\PC}{\text{PC}}
\newcommand{\MIP}{MIOCP}
\theoremstyle{plain}
\newtheorem{theorem}{Theorem}
\newtheorem{algorithm}[theorem]{Algorithm}
\newtheorem{proposition}[theorem]{Proposition}
\newtheorem{corollary}[theorem]{Corollary}
\newtheorem{lemma}[theorem]{Lemma}
\theoremstyle{remark}
\newtheorem{definition}[theorem]{Definition}
\newtheorem{remark}[theorem]{Remark}
\newtheorem{example}[theorem]{Example}
\newif\ifGPblacktext
\newif\ifGPcolor
\newif\ifGPblacktext
\let\gplgaddtomacro\g@addto@macro
\newcommand{\gplbacktext}{}
\newcommand{\gplfronttext}{}
\begin{document}

%%%%%%%%%%%%%%%%%%%%%%%%%%%%%%%%%%%%%%%%%%%%%%%%%%%%%%%%%%%%%%%%%%%%%%%%%%%%%%%%
\title[Relaxation Methods for Mixed-Integer Optimal Control of PDEs]{Relaxation Methods for Mixed-Integer Optimal Control of Partial Differential Equations}

\author[FALK M. HANTE AND SEBASTIAN SAGER]{Falk M. Hante and Sebastian Sager}

\thanks{F.M. Hante is with Department of Mathematics, University of Erlangen-Nuremberg, Cauerstra\ss e 11, 91058 Erlangen, Germany. Phone: +49 (0)9131 85-67128, \url{hante@math.fau.de}.\\
\indent S. Sager is with Institute of Mathematical Optimization, Otto-von-Guericke University, Universit\"atsplatz 2, 39106 Magdeburg, Germany. Phone: +49 (0)391 6718745, \url{sager@ovgu.de}}

\date{November 22, 2012}

%%%%%%%%%%%%%%%%%%%%%%%%%%%%%%%%%%%%%%%%%%%%%%%%%%%%%%%%%%%%%%%%%%%%%%%%%%%%%%%%
\begin{abstract} 
We consider integer-restricted optimal control of systems governed by abstract 
semilinear evolution equations. This includes the problem of
optimal control design for certain distributed parameter systems endowed with 
multiple actuators, where the task is to minimize costs associated with 
the dynamics of the system by choosing, for each instant in time, one of the 
actuators together with ordinary controls. We consider relaxation
techniques that are already used successfully for mixed-integer 
optimal control of ordinary differential equations. 
Our analysis yields sufficient conditions such that the optimal value and the optimal state 
of the relaxed problem can be approximated with arbitrary precision by a control satisfying the
integer restrictions. The results are obtained by semigroup
theory methods. The approach is constructive and gives rise to a 
numerical method. We supplement the analysis with numerical experiments. 
\end{abstract}
%%%%%%%%%%%%%%%%%%%%%%%%%%%%%%%%%%%%%%%%%%%%%%%%%%%%%%%%%%%%%%%%%%%%%%%%%%%%%%%%

\maketitle
%%%%%%%%%%%%%%%%%%%%%%%%%%%%%%%%%%%%%%%%%%%%%%%%%%%%%%%%%%%%%%%%%%%%%%%%%%%%%%%%

%%%%%%%%%%%%%%%%%%%%%%%%%%%%%%%%%%%%%%%%%%%%%%%%%%%%%%%%%%%%%%%%%%%%%%%%%%%%%%%%
\section{Introduction and Problem Formulation}
The factoring of decision processes interacting with continuous evolution 
plays an important role in model-based optimization for many applications.
For example, when optimally controlling chemical processes there are often both continuous decisions
such as inlet and outlet flows, as well as discrete decisions, such as the operation of on--off
valves and pumps that may redirect flows within the reactor, \cite{Sager2005,Kawajiri2006b}.
Such mixed-integer optimal control problems are therefore 
studied in different communities with different approaches.  Most of these approaches address 
problems that are governed by systems of ordinary differential equations in Euclidean spaces, 
see \cite{Sager2009} for a survey on this topic. 

Total discretization of the underlying system obviously leads to typically large mixed-integer 
nonlinear programs. Hence, relaxation techniques have become an integral part of efficient 
mixed-integer optimal control algorithms, either in the context of branch-and-bound type methods or, more directly, by means of nonlinear 
optimal control methods combined with suitable rounding strategies. An important result  
is that the solution of the relaxed problem can be approximated with arbitrary precision by a solution 
fulfilling the integer requirements \cite{SagerBockDiehl2011}.

In this paper, we extend such relaxation techniques to problems that are governed by certain systems of 
partial differential equations. Motivating applications are for example to switch between reductive 
and oxidative conditions in order to maximize the performance in a monolithic catalyst
\cite{vanSintAnnalandKuipersSwaaij2001}, port switching in chromatographic separation processes \cite{Engell2005,Kawajiri2006b}, or to optimize switching control within 
photochemical reactions \cite{SakataJacksonMaoMarriott2008}. Our problem setting also includes the 
switching control design in the sense that systems are equipped with multiple actuators and the 
optimizer has to choose one of these together with ordinary controls for each
instant in time.

Concerning systems involving partial differential equations, such switching control design has already been 
studied using several techniques: In \cite[Chapter~8]{LiYong1995} optimal switching controls are constructed 
for systems governed by abstract semilinear evolution equations by combining ideas from dynamic programming and 
approximations of the value function using viscosity solutions of the Hamilton-Jacobi-Bellman equations. 
Switching boundary control for linear transport equations using switching time sensitivities has been studied 
in \cite{HanteLeugering2009}. Exemplary for the heat equation and based on variational methods, the controllability 
in case of switching among several actuators has been considered in \cite{Zuazua2011} and null-controllability 
for the one-dimensional wave equation with switching boundary control has been considered in \cite{Gugat2008}. 
Based on linear quadratic regulator optimal control techniques and enumeration of the integer values for a fixed 
time discretization, optimal switching control of abstract linear systems has been considered in \cite{IftimeDemetriou2009}. 

Our approach is complementary to the above, as we break the computationally very expensive 
combinatorial complexity of the problem by relaxation. This comes at the downside of providing only a suboptimal 
solution and possibly at the price of fast switching but, as we will see, with arbitrary small integer-optimality gap,  
depending on discretization, and extensions to limit the number of switching.

We will be concerned with the following problem of mixed-integer optimal control: Minimize a cost functional
\begin{equation}\label{eq:cost}
 J = \phi(z(t_f)) + \int_0^{t_f} \psi(z(t),u(t))\,dt
\end{equation}
over trajectories $z \in X_{[0,t_f]} \subset \{z\: [0,t_f] \to X\}$ and control functions 
$u \in U_{[0,t_f]} \subset \{u \: [0,t_f] \to U\}$ and $v \in V_{[0,t_f]} \subset \{v \: [0,t_f] \to V\}$
subject to the constraints that $z$ is a mild solution of the operator
differential equation
\begin{equation}\label{eq:sysuc}
\left\{\begin{aligned}
 &\dot{z}(t) = Az(t) + f(t,z(t),u(t),v(t)), \quad t \in (0,t_f]\\
 &z(0)=z_0 \in X
\end{aligned}\right.
\end{equation}
and that the control functions satisfy
\begin{equation}\label{eq:constraintuc}
u(t) \in \Uad \subset U,~v(t) \in \Vad \subset V,\quad t\in[0,t_f]
\end{equation}
where $X$, $U$ and $V$ are Banach spaces, $X_{[0,t_f]}$, $U_{[0,t_f]}$ and $V_{[0,t_f]}$ are normed linear spaces, 
$A\: D(A) \to X$ is the infinitesimal generator of a strongly continuous semigroup $\{T(t)\}_{t \geq 0}$ 
on $X$, $t_f \geq 0$ is a fixed real number, $f\: [0,t_f]  \times X \times U \times V
\to X$, $\phi\: X \to \RR$ and $\psi\: X \times U \to \RR$ are given functions,
$\Uad$ is some subset of $U$ and $\Vad$ is a finite subset of $V$. This setting is for the most part
classical, except for the assumptions on $\Vad$.

We will refer to the above infinite-dimensional dynamic optimization
problem as mixed-integer optimal control problem, short (\MIP),
and to the control function $[u,v]$ as a mixed-integer control. 
This accounts for the fact that we do not impose restrictions on the set 
$\Uad \subset U$ while we can always identify the finite set $\Vad \subset V$ 
of the feasible control values for $v$ with a finite number of integers
\begin{equation}
\Vad = \{v^1,\ldots,v^N\} \simeq \{1,\ldots,N\}.
\end{equation}
Moreover, the operator differential equation \eqref{eq:sysuc} is an abstract
representation of certain initial-boundary value problems governed by linear
and semilinear partial differential equations, see, e.\,g., \cite{Pazy1983}.

The existence of an optimal solution of the problem (\MIP) depends, inter alia,
on the spaces $X_{[0,t_f]}$, $U_{[0,t_f]}$ and $V_{[0,t_f]}$ where we seek 
$z\: [0,t_f] \to X$, $u\: [0,t_f] \to U$ and $v\: [0,t_f] \to V$, respectively. Common choices 
are, for $U_ {[0,t_f]}$, the spaces of square integrable ($L^2$), piecewise $k$-times differentiable 
($C^k_\pw$) or (piecewise) $k$-times weakly differentiable ($H^k_\pw$) functions $u \: [0,t_f] \to U$
and, for $V_{[0,t_f]}$, the spaces of essentially bounded ($L^\infty$) or piecewise constant 
($\PC$) functions $v\: [0,t_f] \to V$. We defer these considerations by assuming later that 
there exists an optimal solution of a related (to a certain extent convexified and relaxed) 
optimal control problem and present sufficient conditions guaranteeing that the solution of 
the relaxed problem can be approximated with arbitrary precision by a solution satisfying 
the integer restrictions. We do only assume that $X_{[0,t_f]} \subset C([0,t_f];X)$, 
$U_{[0,t_f]}\subset L^1(0,t_f;U)$ and $V_{[0,t_f]} \subset L^1(0,t_f;V)$ to ensure that certain 
quantities in problem (\MIP) are well-defined.

This relaxation method becomes most easily evident from writing the problem (\MIP)
using a differential inclusion, that is, minimize \eqref{eq:cost} subject to the 
constraints that $z$ is a solution of 
\begin{equation}\label{eq:di}       
  \left\{\begin{aligned}
       &\dot{z}(t) \in Az(t) + \{f(t,z(t),u(t),v^i) : v^i \in \Vad\},
       \quad t \in (0,t_f]\\ 
       &z(0)=z_0.
   \end{aligned}\right. 
\end{equation}       
and $u$ satisfies
\begin{equation*}
       u(t) \in \Uad,~\quad~t\in[0,t_f].
\end{equation*}
It is well known that, under certain technical assumptions, the solution set of
\eqref{eq:di} is dense in the solution set of the convexified differential 
inclusion
\begin{equation}\label{eq:diconvex}       
  \left\{\begin{aligned}
       &\dot{z}(t) \in Az(t) + \cco\{f(t,z(t),u(t),v^i) : v^i \in \Vad\},
       \quad t \in (0,t_f]\\ 
       &z(0)=z_0,
   \end{aligned}\right. 
\end{equation}
where $\cco$ denotes the closure of the convex hull. This is proved in 
\cite{Frankowska1990} for the case when $X$ is a separable Banach space 
and in \cite{deBlasiPianigiani1999} for non-separable Banach spaces. While
these results rely on powerful selection theorems, our main contribution
is a constructive proof based on discretization, giving rise to a numerical 
method at the prize of additional regularity assumptions.

We will see that the advantage of such a relaxation method is that the
convexified problem, using a particular representation of \eqref{eq:diconvex}, 
falls into the class of optimal control problems with partial differential
equations without integer-restrictions. The already known theory, in particular 
concerning existence, uniqueness and regularity of optimal solutions as well as
numerical considerations such as sensitivities, error analysis for finite element
approximations, etc., can thus be carried over to the mixed-integer problem
under consideration here. We also discuss the possibility to include state-constraints
for example enforcing time-periodicity constraints
\begin{equation}
 z(t_f)=z(0),
\end{equation}
as occurring in chromatographic separation processes \cite{Engell2005,Kawajiri2006b}.
The disadvantage of this approach is that we target at a solution that is only suboptimal 
(though with arbitrary precision) and that switching costs, a standard regularization of 
mixed-integer problems to prevent chattering solutions, or additional combinatorial constraints 
can lead to larger optimality/feasibility gaps. Nevertheless, we will show how a-priori bounds for such a gap
can be obtained when constraints on the number of switches are incorporated.

The framework we use for the analysis here will be semigroup theory. Recall
that for given $z_0 \in X$ and given control functions $u,v$, the mild solution
of the state equation \eqref{eq:sysuc} is given by a function $z \in C(0,t_f;X)$ 
satisfying the variation of constants formula
\begin{equation}\label{eq:vc}
z(t) = T(t)z_0 + \int_0^t T(t-s)f(s,z(s),u(s),v(s))\,ds,\quad 0 \leq t \leq t_f
\end{equation} 
in the Lebesgue-Bochner sense. This abstract setting covers in particular the
usual setup for weak solutions of linear parabolic partial differential equations 
with distributed control on reflexive Banach spaces where $A$ arises from a
time-invariant variational problem, see \cite[Section~1.3]{BensoussanDaPratoDelfourMitter1992}.

We include in our analysis explicitly the possibility to approximate the state equation \eqref{eq:sysuc} 
and say that $\tilde{z} \in X_{[0,t_f]}$ is an $\varepsilon$-accurate solution
of \eqref{eq:sysuc} if $z$ is the mild solution and $\|\tilde{z}(t)-z(t)\|_X \leq \varepsilon$ for
all $t \in [0,t_f]$. 
Accordingly, we define for a given $\varepsilon \geq 0$ the set of $\varepsilon$-admissible solutions for (\MIP) as
\begin{equation}\label{eq:Xidef}
 \begin{aligned}
 &\Xi_\varepsilon := \bigr\{(u,v,z) \in U_{[0,t_f]} \times V_{[0,t_f]} \times X_{[0,t_f]} : u,v~\text{satisfy}~\eqref{eq:constraintuc}\\
 &\qquad\qquad~\text{and}~z=z(u,v)~\text{is an}~\varepsilon\text{-accurate solution of}~\eqref{eq:sysuc}\bigr\}.
 \end{aligned}
\end{equation}
Further, we denote throughout the paper by $H^{1}_\pw(0,t_f;X)$ the space 
of $X$-valued functions defined on the interval $[0,t_f]$ and being piecewise once-weakly 
differentiable with a piecewise defined weak derivative that is square-integrable in the 
Lebesgue-Bochner sense. Consistently, we denote by $C^{0,\vartheta}_{\pw}(0,t_f;X)$ the space 
of $X$-valued functions defined on the interval $[0,t_f]$ being piecewise Hölder-continuous with 
a Hölder-constant $\vartheta$. In both constructions, piecewise means that there exists a finite partition of the 
interval $[0,t_f]$
\begin{equation}\label{eq:piecewisedef}
  0=\tau_0<\tau_1<\tau_2<\ldots<\tau_{K_{t_f}}<\tau_{K_{t_f}+1}=t_f
\end{equation}
so that the function has the respective regularity on all intervals 
$[\tau_k,\tau_{k+1})$, $k=0,\ldots,K_{t_f}$. We denote by $\|\cdot\|_X$ the norm on $X$ and
by $\|\cdot\|_{\Lcal(X)}$ the operator norm induced by $\|\cdot\|_X$. Further, we denote by
$\|\cdot\|_{U_{[0,t_f]}}$ and $\|\cdot\|_{V_{[0,t_f]}}$ the norm of $U_{[0,t_f]}$ and $V_{[0,t_f]}$,
respectively. For simplicity of notation, we also define $T(-t)=\Id$ for all $t>0$, $\Id$ denoting the identity on $X$.

The paper is organized as follows. In Section~\ref{sec:relax}, we present
details of the relaxation method and the main results concerning estimates of
the approximation error. %In Section~\ref{sec:proofMainTheorem}, we prove the results.
In Section~\ref{sec:cconstraints}, we discuss extensions of the method to 
incorporate certain combinatorial constraints.
In Section~\ref{sec:examples}, we discuss applications for linear
and semilinear equations and present numerical results for the heat equation
with spatial scheduling of different actuators and a semilinear
reaction-diffusion system with an on-off type control. In Section~\ref{sec:final}, we
conclude with some additional remarks and point out open problems.

%%%%%%%%%%%%%%%%%%%%%%%%%%%%%%%%%%%%%%%%%%%%%%%%%%%%%%%%%%%%%%%%%%%%%%%%%%%%%%%%
\section{Relaxation Method}\label{sec:relax}
Consider the following problem involving a particular representation of
the convexified differential inclusion \eqref{eq:diconvex} 
\begin{subequations} \label{eq:relaxedconvexifiedproblem}
\begin{empheq}[left=\empheqlbrace]{align}
       &\underset{{[\omega,\alpha,y] \in U_{[0,t_f]} \times \tilde{V}_{[0,t_f]} \times X_{[0,t_f]}}}{\text{minimize}}~J = \phi(y(t_f)) + \int_0^{t_f} \psi(y(t),\omega(t))\,dt\quad
       \text{s.\,t.} \label{eq:relaxedconvexifiedproblemA}\\ 
       &\quad \dot{y}(t) = A y(t) + \sum_{i=1}^N \alpha_i(t)
       f(t,y(t),\omega(t),v^i),\quad t \in  (0,t_f]\label{eq:relaxedconvexifiedproblemB} \\ 
       &\quad y(0)=y_0:=z_0\label{eq:relaxedconvexifiedproblemC}\\
       &\quad \vphantom{\sum_{i=1}^N} \omega(t) \in \Uad,\quad~t\in[0,t_f]\label{eq:relaxedconvexifiedproblemD}\\
       &\quad \alpha(t)=(\alpha_1(t),\ldots,\alpha_N(t)) \in [0,1]^N,\quad~t\in[0,t_f]\label{eq:relaxedconvexifiedproblemE}\\
       &\quad \sum_{i=1}^N \alpha_i(t) = 1,\quad~t\in[0,t_f]\label{eq:relaxedconvexifiedproblemF},
\end{empheq}
\end{subequations}
where we write $\omega,\alpha,y$ for $u,v,z$, respectively, to emphasize the relaxation of the original problem (\MIP).
Here, $\tilde{V}_{[0,t_f]}=\{\alpha\: [0,t_f] \to \RR^N : \|\alpha\|_{V_{[0,t_f]}} < \infty\}$.

\begin{floatingalg}
\rule{\textwidth}{0.4pt}
\begin{algorithm}\label{algo:iter}\mbox{} \normalfont
\begin{algorithmic}[1] 
  \STATE Choose a time discretization grid 
  $\Gcal^0 = \{0 = t^0_0 < t^0_1 < \cdots < t^0_{n^0} = t_f\}$, a sequence of non-negative 
  accuracies $\{\varepsilon^k\}_{k \in \NN}$ and some termination tolerance $\varepsilon>0$. 
  Set $k=0$.
  \LOOP
	    \STATE Find an $\varepsilon^k$-accurate optimal solution $[\omega^k,\alpha^k,y^k]$ of 
	    the relaxed problem \eqref{eq:relaxedconvexifiedproblem} and set $J^k_\rel:=J(\omega^k,\alpha^k,y^k)$.
	    \STATE If $\alpha^k \in \PC(0,t_f;\{0,1\}^N)$ and $\varepsilon^k \leq \varepsilon$, 
	    then set $v^k(t)=\sum_{i=1}^N \alpha_i^k(t) v^i$ and $z^k(t)=y^k(t)$ for $t \in [0,t_f]$ and STOP.
            \STATE Using $\Gcal^k$ and $\alpha^k$, define a piecewise constant function
            $\beta^k=(\beta^k_1,\ldots,\beta^k_N): [0,t_f] \to \{0,1\}^N$ by
            \begin{equation}\label{eq:omegadef}\tag{A$_1$}
            \beta^k_i(t) = p^k_{i,j},\quad t\in [t^k_j,t^k_{j+1}),~i=1,\ldots,N,~j=0,\ldots,n^k-1
            \end{equation}
            where for all $i=1,\ldots,N$, $j=0,\ldots,n^k-1$
            \begin{equation}\label{eq:pdef}\tag{A$_2$}
            \begin{aligned}
            &p^k_{i,j} = \begin{cases}1~&\text{if}~\left(\hat{p}^k_{i,j} \geq \hat{p}^k_{l,j}~\forall~l \in \{1,\ldots,N\}\setminus\{i\}\right) \text{and}\\
            &~\left(i<l~\forall~l \in \{1,\ldots,N\}\setminus\{i\}~\text{:}~\hat{p}^k_{i,j}=\hat{p}^k_{l,j}\right)\\
            0~&\text{else}\end{cases}\\
            &\hat{p}^k_{i,j}= \int_0^{t^k_{j+1}} \alpha^k_i(\tau)\,d\tau - \sum_{l=0}^{j-1} p^k_{i,l} (t^k_{l+1}-t^k_{l}).
            \end{aligned}
            \end{equation}
            \STATE Set $u^k(t)=\omega^k(t)$, $v^k(t) = \sum_{i=1}^N \beta_i^k(t) v^i$ for $t \in [0,t_f]$ and $J^k=\phi(z^k(t_f)) + \int_0^{t_f}
            \psi(z^k(t),\omega^k(t))\,dt$ where $z^k$ is an $\varepsilon^k$-accurate solution of
            \begin{equation}\label{eq:sysforward}\tag{A$_3$}
            %\left\{\begin{aligned}
            \dot{z}(t) = Az(t) + \sum_{i=1}^N \beta^k_i(t)
            f(t,z(t),\omega^k(t),v^i),\quad t \in  (0,t_f],\quad z(0)=z_0.
            %\end{aligned}\right.
            \end{equation} 
            \STATE If $|J^k_\rel-J^k| \leq \frac{\varepsilon}{2}$ and $\varepsilon^k \leq \frac{\varepsilon}{2}$ then STOP.  %$J^k<J^*+\varepsilon$ then STOP.
            \STATE Choose $\Gcal^{k+1}=\{0 = t^{k+1}_0 < t^{k+1}_1 < \cdots
            < t^{k+1}_{n^{k+1}} = t_f\}$ such that $\Gcal^{k}\subset
            \Gcal^{k+1}$ and set $k=k+1$.
  \ENDLOOP
  \STATE Set $u^*(t)=\omega^k(t)$, $v^*(t) = v^k(t)$ and $z^*(t)=z^k(t)$ for $t \in [0,t_f]$.
\end{algorithmic} %\mbox \hfill\qed
\end{algorithm}
%\end{minipage}
\rule{\textwidth}{0.4pt}
\end{floatingalg}

Observe that the control functions $\alpha_i$ take values on the full interval $[0,1]$,
but that any optimal controls $[\omega^*,\alpha^*]$ of
\eqref{eq:relaxedconvexifiedproblem} yields an optimal mixed-integer control 
\begin{equation}\label{eq:controlbij}
[u^*,v^*]:=[\omega^*,\sum_{i=1}^N \alpha^*_i v^i] 
\end{equation}
of problem (\MIP) if $\alpha^*(t) \in \{0,1\}^N$ for almost every $t \in (0,t_f)$. However, it is not 
very difficult to construct examples where $\alpha^*(t) \in (0,1)^N$ for $t$ on some interval
of positive measure. It is only in some special cases where optimality of
$\alpha^*(t) \in [0,1]^N$ implies that $\alpha^*$ takes only values on the
boundary of its feasible set. For examples where this property, known as the bang-bang principle,
can be verified in the context of partial differential equations, 
see \cite[Section~3.2.4]{Troeltzsch2010} and the references therein. Moreover, the relaxed problem
\eqref{eq:relaxedconvexifiedproblem} can in most applications only be solved approximately.

Therefore, consider the following hypothesis.
\begin{list}{\labelitemi}{\itemsep=1em\leftmargin=1.5em}
  \item[{\normalfont (H$_0$)}] Problem \eqref{eq:relaxedconvexifiedproblem} has an optimal
  solution in $U_{[0,t_f]} \times \tilde{V}_{[0,t_f]} \times X_{[0,t_f]}$.
\end{list}
Under this assumption, we will accept $\varepsilon$-accurate optimal solutions of the relaxed problem \eqref{eq:relaxedconvexifiedproblem} and
propose in Algorithm~\ref{algo:iter} an iterative procedure to obtain from these solutions
a mixed-integer control taking values in $\Uad \times \{0,1\}^N$. We then show in Theorem~\ref{thm:convergence} 
under certain technical assumptions that the optimal value and the optimal state of problem \eqref{eq:relaxedconvexifiedproblem} can 
be approximated by the proposed procedure with arbitrary precision. To make our terminology precise, we 
include the following definition.

\begin{definition} Given some $\varepsilon \geq 0$, we say that $[\omega^\varepsilon,\alpha^\varepsilon,y^\varepsilon] \in U_{[0,t_f]} \times \tilde{V}_{[0,t_f]} \times X_{[0,t_f]}$ 
is an \emph{$\varepsilon$-accurate optimal solution of \eqref{eq:relaxedconvexifiedproblem}} if
$y^\varepsilon$ is an $\varepsilon$-accurate solution of \eqref{eq:relaxedconvexifiedproblemB}--\eqref{eq:relaxedconvexifiedproblemC}
with $\omega=\omega^\varepsilon$ and $\alpha=\alpha^\varepsilon$, the constraints \eqref{eq:relaxedconvexifiedproblemD}--\eqref{eq:relaxedconvexifiedproblemF}
hold with $\omega=\omega^\varepsilon$ and $\alpha=\alpha^\varepsilon$ and
\begin{equation}
J(\omega^\varepsilon,\alpha^\varepsilon,y^\varepsilon) \leq \inf_{\omega,\alpha,y~\text{s.t. \eqref{eq:relaxedconvexifiedproblemB}--\eqref{eq:relaxedconvexifiedproblemF}}} J(\omega,\alpha,y) + \varepsilon.
\end{equation}
\end{definition}

So any $\varepsilon$-accurate optimal solution of the relaxed problem \eqref{eq:relaxedconvexifiedproblem} is admissible with respect to
the control constraints on $\alpha$ and $\omega$, $\varepsilon$-close admissible with respect to the state variable $y$ and the corresponding
value of the cost function is $\varepsilon$-close to the optimal value of problem \eqref{eq:relaxedconvexifiedproblem}. 

Now, consider Algorithm~\ref{algo:iter} on page~\pageref{algo:iter} to obtain a
mixed-integer control for problem (\MIP). The main result is the following.

\begin{theorem}\label{thm:convergence}
Assuming {\normalfont (H$_0$)}, let $[\omega^*,\alpha^*,y^*]$ denote an optimal solution of the 
relaxed problem \eqref{eq:relaxedconvexifiedproblem} and assume that the following
assumptions hold true.
\begin{list}{\labelitemi}{\itemsep=1em\leftmargin=1em}
  \item[{\normalfont(H$_1$)}] The functions $\phi$, $\psi$, and $f$ satisfy the Lipschitz-estimates
\begin{equation*}%\label{eq:phiLip0}
\begin{aligned}
|\phi(y_1)-\phi(y_2)| &\leq \eta \|y_1 -y_2\|_X,\\
|\psi(y_1,\omega_1)-\psi(y_2,\omega_2)| &\leq \xi (\|y_1 - y_2\|_X + \|\omega_1-\omega_2\|_U),\\
\left\|f(t,y_1,\omega_1,v^i)-f(t,y_2,\omega_2,v^i)\right\|_X &\leq L (\|y_1 -  y_2\|_X + \|\omega_1 - \omega_2\|_U),
\end{aligned}
\end{equation*} 
for all $y_1,y_2 \in X,~\omega_1,\omega_2 \in \Uad$, $t \in [0,t_f]$ and $i=1,\ldots,N$ 
with positive constants $\eta$, $\xi$ and $L$.
\item[{\normalfont (H$_2$)}] For all $i=1,\ldots,N$ and $t \in [0,t_f]$ the function 
  \begin{equation*}
  s \mapsto T(t-s)f(s,y^*(s),\omega^*(s),v^i) 
  \end{equation*}
  is in $H^1_{\pw}(0,t_f;X)$
  and there exists a positive constant $C_i$ such that
  \begin{equation*}
  \left\|\frac{d}{ds}(T(t-s)f(s,y^*(s),\omega^*(s),v^i))\right\|_X \leq
  C_i\quad\text{a.\,e. in}~0<s<t<t_f.
  \end{equation*}
  Let $C=\sum_{i=1}^N C_i$.
  \item[{\normalfont (H$_3$)}] For all $i=1,\ldots,N$, there exists a positive 
  constant $M_i$ such that
  \begin{equation*}
    \sup_{t\in[0,t_f]} \|f(t,y^*(t),\omega^*(t),v^i)\|_X \leq M_i.
  \end{equation*}
  Let $M=\sum_{i=1}^{N} M_i$.
  \item[{\normalfont(H$_4$)}] The solution $[\omega^*,\alpha^*,y^*]$ is stable in the sense that there exists a 
  positive constant $C_J$ such that 
  \begin{equation}
  \begin{aligned}
   &\|\omega^*(t)-\omega(t)\|_U + \|\alpha^*(t)-\alpha(t)\|_{\RR^N} \leq \\
   &\qquad C_J |J(\omega^*,\alpha^*,y(\omega^*,\alpha^*))-J(\omega,\alpha,y(\omega,\alpha))|~\text{a.\,e. in}~(0,t_f)
   \end{aligned}
  \end{equation}
  for $[\omega,\alpha]$ in some neighborhood of $[u^*,\alpha^*]$ in $U_{[0,t_f]} \times \tilde{V}_{[0,t_f]}$.
\end{list}
Moreover, assume that $\varepsilon^k \to 0$ and that the sequence $\{\Gcal^{k}\}_k$ in Algorithm~\ref{algo:iter} is such that
$\Delta t^k \to 0$ with
\begin{equation}\label{eq:deltatdef}
\Delta t^k = \max_{i=1,\ldots,n^k} \{t^k_{i}-t^k_{i-1}\}.
\end{equation} 
Define the constants $C_1=(C_J(M+1)(1+\eta+\xi)e^{t_f \bar{M}L}+1)$, $C_2=(M + t_f C)e^{t_f \uM L}$,
$C_3:=(\eta + t_f\xi) C_1 + t_f\xi C_J$ and $C_4:=(\eta+t_f\xi)C_2$, where 
$\uM=\sup_{t \in [0,t_f]} \|T(t)\|_{\Lcal(X)}$ and the constants $\eta$, $\xi$, $M$, $C$, $L$ and $C_J$ are given by hypothesis 
{\normalfont (H$_1$)}--{\normalfont (H$_4$)}. Then, $[u^k,v^k,z^k]$ defined by Algorithm~\ref{algo:iter} is 
in $\Xi_{\varepsilon^k}$ for all $k=0,1,2,\ldots$ and satisfies the estimates
\begin{equation}\label{eq:solest0}
 \|y^*(t)-z^k(t)\|_X \leq C_1\varepsilon^k+C_2(N-1)\Delta t^k,\quad t \in [0,t_f],
\end{equation}
and 
\begin{equation}\label{eq:Jest0}
\begin{aligned}
 &|J(\omega^*,\alpha^*,y^*) - J(u^k,v^k,z^k)| \leq C_3 \varepsilon^k + C_4(N-1) \Delta t^k.
 \end{aligned}
\end{equation}
In particular, Algorithm~\ref{algo:iter} terminates in a finite number of steps with an $\varepsilon$-feasible 
mixed-integer solution $[u^*,v^*,z^*] \in \Xi_{\varepsilon}$ of Problem (\MIP) satisfying the estimate 
\begin{equation}\label{eq:MIOCPoptValueEst}
|J(\omega^*,\alpha^*,y^*) - J(u^*,v^*,z^*)| \leq \varepsilon,
\end{equation}
where $\varepsilon>0$ is chosen arbitrarily in step {\footnotesize\normalfont $1$:}.
\end{theorem}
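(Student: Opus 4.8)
The plan is to bound both the state deviation \eqref{eq:solest0} and the objective deviation \eqref{eq:Jest0} by separating two independent error sources: the suboptimality of the $\varepsilon^k$-accurate relaxed solution $[\omega^k,\alpha^k,y^k]$ relative to the true optimum $[\omega^*,\alpha^*,y^*]$, and the rounding error introduced when $\alpha^k$ is replaced by the binary $\beta^k$ of \eqref{eq:pdef}. I would first introduce the \emph{exact} mild solution $\hat z^k$ of \eqref{eq:sysforward}, so that $\|z^k(t)-\hat z^k(t)\|_X\le\varepsilon^k$ by the $\varepsilon^k$-accuracy imposed in step 5, and reduce everything to estimating $\|y^*(t)-\hat z^k(t)\|_X$ through the variation of constants formula \eqref{eq:vc}.

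Writing $y^*(t)-\hat z^k(t)$ as a single Bochner integral of $T(t-s)$ applied to the difference of the right-hand sides, I would split the integrand into two parts. The first, $\sum_i\beta^k_i(s)\,[f(s,y^*,\omega^*,v^i)-f(s,\hat z^k,\omega^k,v^i)]$, is controlled by the Lipschitz estimate (H$_1$) by $L(\|y^*-\hat z^k\|_X+\|\omega^*-\omega^k\|_U)$; since $\sum_i\beta^k_i=1$, this feeds a Gronwall argument and produces the factor $e^{t_f\uM L}$, with $\uM=\sup_t\|T(t)\|_{\Lcal(X)}$. The second part, $\sum_i(\alpha^*_i-\beta^k_i)(s)\,f(s,y^*,\omega^*,v^i)$, I would decompose further as $(\alpha^*_i-\alpha^k_i)+(\alpha^k_i-\beta^k_i)$, isolating the suboptimality contribution from the genuine rounding contribution.

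The rounding contribution is the heart of the argument. Setting $g_i(s)=T(t-s)f(s,y^*(s),\omega^*(s),v^i)$ and $G_i(s)=\int_0^s(\alpha^k_i-\beta^k_i)\,d\tau$, I would integrate by parts to obtain $\int_0^t(\alpha^k_i-\beta^k_i)g_i\,ds=G_i(t)g_i(t)-\int_0^tG_i\,g_i'\,ds$, using $G_i(0)=0$. The construction \eqref{eq:pdef} is exactly sum-up rounding, for which one shows (as in the ordinary-differential-equation setting of \cite{SagerBockDiehl2011}) the key estimate $\sup_t\|G_i(t)\|_{\RR}\le(N-1)\Delta t^k$. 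Combined with $\|g_i(t)\|_X\le M_i$ from (H$_3$) (note $T(0)=\Id$) and the a.e. bound $\|g_i'(s)\|_X\le C_i$ from (H$_2$), summation over $i$ yields a bound of order $(N-1)\Delta t^k\,(M+t_fC)$, which after the Gronwall step becomes the term $C_2(N-1)\Delta t^k$. I expect the main obstacle to lie precisely here: the rigorous handling of the integration by parts across the finitely many breakpoints of the $H^1_{\pw}$ functions $g_i$, at which $g_i$ may jump. One must check that these jump contributions are themselves of order $\Delta t^k$ (since $G_i$ is continuous and $O(\Delta t^k)$ while $g_i$ is bounded through (H$_3$)) and do not corrupt the stated constant, and one must establish the sum-up rounding bound in the present measure-theoretic setting.

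For the suboptimality contribution I would use (H$_4$): because $[\omega^k,\alpha^k,y^k]$ is $\varepsilon^k$-accurate optimal, the exact relaxed cost $J(\omega^k,\alpha^k,y(\omega^k,\alpha^k))$ differs from $J(\omega^*,\alpha^*,y^*)$ by at most a constant multiple of $\varepsilon^k$ (combining the Lipschitz bounds on $\phi,\psi$ from (H$_1$) with the $\varepsilon^k$-accuracy of $y^k$), so that (H$_4$) furnishes a pointwise a.e. bound $\|\omega^*-\omega^k\|_U+\|\alpha^*-\alpha^k\|_{\RR^N}\le C_J(1+\eta+\xi)\varepsilon^k$. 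Propagating this through the variation of constants estimate with (H$_3$) and the same Gronwall factor produces the coefficient $C_1$ of $\varepsilon^k$ in \eqref{eq:solest0}, the trailing $+1$ coming from $\|z^k-\hat z^k\|_X\le\varepsilon^k$. The objective estimate \eqref{eq:Jest0} then follows by applying (H$_1$) to $J$: the terminal term contributes $\eta\|y^*(t_f)-z^k(t_f)\|_X$ and the running term $\xi\int_0^{t_f}(\|y^*-z^k\|_X+\|\omega^*-\omega^k\|_U)\,dt$, and inserting \eqref{eq:solest0} together with the (H$_4$) control bound gives exactly $C_3$ and $C_4$. Finally, since $\varepsilon^k\to0$ and $\Delta t^k\to0$, the right-hand sides of \eqref{eq:solest0}--\eqref{eq:Jest0} vanish; bounding $|J^k_{\rel}-J^k|\le\varepsilon^k+C_3\varepsilon^k+C_4(N-1)\Delta t^k$ shows the stopping test in step 7 is satisfied for all large $k$, which yields finite termination, the membership $[u^k,v^k,z^k]\in\Xi_{\varepsilon^k}\subseteq\Xi_\varepsilon$, and \eqref{eq:MIOCPoptValueEst} by one more triangle inequality on the objective.
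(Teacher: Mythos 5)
Your proposal is correct, and it rests on the same two pillars as the paper's proof: the sum-up-rounding bound $\sup_{t}\bigl|\int_0^t(\alpha_i^k-\beta_i^k)\,d\tau\bigr|\le(N-1)\Delta t^k$ (the paper's Lemma~\ref{lem:integeralapprox}, quoted from \cite{SagerBockDiehl2011}) and the integration-by-parts/Gronwall estimate under (H$_1$)--(H$_3$) (the content of the paper's Lemma~\ref{lem:solutionest}), with (H$_4$) entering exactly as in the paper's inequality \eqref{eq:omegaconv} to convert $\varepsilon^k$-suboptimality of $[\omega^k,\alpha^k,y^k]$ into a pointwise a.e.\ bound on $\|\omega^*-\omega^k\|_U+\|\alpha^*-\alpha^k\|_{\RR^N}$. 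What differs is the decomposition, and the difference is not purely cosmetic. The paper chains four intermediate trajectories, $z^k\to y(\cdot;\omega^k,\beta^k)\to z(\cdot;\omega^*,\beta^k)\to y(\cdot;\omega^*,\alpha^k)\to y^*$, applies Gronwall separately to three legs, and invokes Lemma~\ref{lem:solutionest} for the rounding leg with $y^*$ replaced by the \emph{auxiliary} trajectory $y(\cdot;\omega^*,\alpha^k)$; strictly speaking this requires (H$_2$)--(H$_3$) to hold, with the same constants $C$ and $M$, along that auxiliary trajectory, a transfer the paper leaves implicit. Your single-intermediate decomposition (exact solution $\hat z^k$ plus the inline split $\alpha^*-\beta^k=(\alpha^*-\alpha^k)+(\alpha^k-\beta^k)$) keeps the functions $g_i(s)=T(t-s)f(s,y^*(s),\omega^*(s),v^i)$ anchored at the optimal pair, so (H$_2$)--(H$_3$) are used verbatim as stated, which is cleaner. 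The price is that your merged Gronwall step yields an $\varepsilon^k$-coefficient of a different form than the stated $C_1=(C_J(M+1)(1+\eta+\xi)e^{t_f\uM L}+1)$ (one obtains something like $\uM t_f(L+M)C_J(1+\eta+t_f\xi)e^{\uM Lt_f}+1$), and correspondingly for $C_3$; to reproduce literally the constants in the theorem one has to follow the paper's term-by-term chain, whereas your argument proves the estimates with constants of the same structure. Two smaller points: the breakpoint-jump issue you flag is real but equally present in the paper, whose telescoping sum in Lemma~\ref{lem:solutionest} tacitly assumes the $H^1_{\pw}$ pieces match continuously at the $\tau_k$ --- and, contrary to your parenthetical, genuine jumps would enter with a constant proportional to their number and size, hence $O(\Delta t^k)$ but not absorbed into the stated $C_2$; and your direct termination argument (the step-7 test holds for all large $k$ since both right-hand sides vanish) is an unproblematic variant of the paper's argument by contradiction.
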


%%%%%%%%%%%%%%%%%%%%%%%%%%

Before we prove Theorem~\ref{thm:convergence}, we first prove a result saying
that the deviation of two mild solutions in $X_{[0,t_f]}$ equipped with the
uniform norm can be estimated in terms of the absolute value of the integrated difference of two linearly entering 
control functions. This estimate is non-standard and generalizes the result in \cite{SagerBockDiehl2011}
to a Banach space setting, noting that the absolute value of the integrated difference is not a norm and 
in particular not comparable to the $V_{[0,t_f]}$-norm as a natural choice. This estimate, together with an approximation result 
for this integrated difference is the key ingredient in order to prove all a-priori estimates needed for
the proof of Theorem~\ref{thm:convergence}.

\begin{lemma} \label{lem:solutionest} Let $\varepsilon>0$ and $\uM=\sup_{t \in
[0,t_f]} \|T(t)\|_{\Lcal(X)}$. %and $\delta=\|x_0-y_0\|_X$. 
Suppose that $[\omega^*,\alpha^*,y^*]$ is a feasible solution of the relaxed problem
\eqref{eq:relaxedconvexifiedproblem} and assume that the hypotheses
{\normalfont (H$_1$)}--{\normalfont (H$_3$)} of Theorem~\ref{thm:convergence} hold true. Let
$\beta=(\beta_1,\ldots,\beta_N) \in L^\infty(0,t_f;[0,1]^N)$ be such 
that
\begin{equation}\label{eq:intdiffest}
 \max_{i=1,\ldots,N}\sup_{t\in[0,t_f]}\left|\int_0^t
 \alpha^*_i(\tau)-\beta_i(\tau)\,d\tau \right|
 \leq \varepsilon
\end{equation}
and let $z$ be the mild solution of \eqref{eq:sysforward} in Algorithm~\ref{algo:iter} with
$\beta^k_i=\beta_i$, $i=1,\ldots,N$, and $\omega^k=\omega^*$. Then
\begin{equation}
 \|y^*(t)-z(t)\|_X \leq \left((M + Ct)e^{\uM L t}\right)\varepsilon,\quad t\in[0,t_f].
\end{equation}
\end{lemma}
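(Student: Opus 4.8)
The plan is to estimate the difference of two mild solutions using the variation-of-constants formula, where the key difficulty is converting a bound on the *integrated* difference of controls (the quantity in \eqref{eq:intdiffest}, which is not a norm) into a bound on the state deviation. Let me think through the structure.

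The two solutions $y^*$ and $z$ both satisfy variation-of-constants formulas with the same semigroup, same $\omega^*$, same initial data, but with weights $\alpha^*_i$ versus $\beta_i$ respectively. Subtracting, the difference $y^*(t) - z(t)$ splits into two parts: one involving the difference of the nonlinearities evaluated along the two trajectories (which will be controlled by Lipschitz continuity (H$_1$) and absorbed via Grönwall), and a second part $\sum_i \int_0^t T(t-s)(\alpha^*_i(s) - \beta_i(s)) f(s, y^*(s), \omega^*(s), v^i)\,ds$ that carries the mismatch in the control weights.

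The genuinely nonstandard step is bounding this second term. Here I cannot simply pull out a sup-norm of $\alpha^* - \beta$, since \eqref{eq:intdiffest} only controls the *antiderivative* of the difference, not the difference itself. Instead I would introduce the scalar function $g_i(s) = T(t-s) f(s, y^*(s), \omega^*(s), v^i)$ and perform integration by parts in the Lebesgue-Bochner sense, writing $\int_0^t (\alpha^*_i - \beta_i)(s)\, g_i(s)\, ds$ in terms of the antiderivative $G_i(s) := \int_0^s (\alpha^*_i - \beta_i)(\tau)\,d\tau$. Integration by parts gives $G_i(t) g_i(t) - \int_0^t G_i(s)\, g_i'(s)\, ds$. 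This is exactly why (H$_2$) is needed: it guarantees $g_i \in H^1_{\pw}$ with $\|g_i'\|_X \le C_i$, so the derivative term is integrable; and (H$_3$) bounds $\|g_i(t)\|_X \le \uM M_i$ at the endpoint. Combined with the assumption $\sup_s |G_i(s)| \le \varepsilon$, this yields
\begin{equation*}
\left\| \int_0^t (\alpha^*_i - \beta_i)(s)\, g_i(s)\, ds \right\|_X \le \uM M_i \varepsilon + C_i t\, \varepsilon,
\end{equation*}
and summing over $i$ produces the $(\uM M + C t)\varepsilon$ contribution. (One must be slightly careful across the finitely many pieces of the piecewise-$H^1$ function, but the boundary terms telescope since $G_i$ is continuous.)

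With the control-mismatch term bounded, I would then close the argument by Grönwall. Writing $e(t) = \|y^*(t) - z(t)\|_X$, the Lipschitz estimate on $f$ from (H$_1$) together with $\|T(t-s)\|_{\Lcal(X)} \le \uM$ gives a bound of the form
\begin{equation*}
e(t) \le (\uM M + C t)\varepsilon + \uM L \int_0^t e(s)\, ds,
\end{equation*}
where I have used $\sum_i \alpha^*_i = 1$ (and similarly for $\beta$, or at least $\beta \in [0,1]^N$) so that the convex-combination structure keeps the Lipschitz contributions summing correctly. Applying Grönwall's inequality to this integral inequality yields $e(t) \le (\uM M + C t)e^{\uM L t}\varepsilon$. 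Comparing with the claimed bound $((M + Ct)e^{\uM L t})\varepsilon$, I expect the $\uM$ in front of $M$ to be either absorbed into the constant or handled by the convention $\uM \ge 1$ (since $T(0) = \Id$); this cosmetic discrepancy is the kind of detail I would reconcile against the authors' precise constants.

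The main obstacle is unquestionably the integration-by-parts step: recognizing that the integrated control difference must be traded against the regularity of $s \mapsto T(t-s) f(\cdots)$, rather than against a norm of $\alpha^* - \beta$. This is precisely the "non-standard" estimate the authors flag, and it is where hypotheses (H$_2$) and (H$_3$) earn their keep. Everything downstream — the Lipschitz splitting and Grönwall closure — is routine once this term is controlled.
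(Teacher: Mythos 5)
Your proposal is correct and follows essentially the same route as the paper: the same splitting of the mild-solution difference into a Lipschitz part and a control-mismatch part, integration by parts of the mismatch term against the antiderivative $G_i(s)=\int_0^s(\alpha_i^*-\beta_i)(\tau)\,d\tau$ over the pieces of the $H^1_{\pw}$ functions (with (H$_2$) bounding the derivative term, (H$_3$) the boundary term, and the piecewise boundary contributions telescoping), followed by a Grönwall closure. The constant discrepancy you flag resolves exactly as you suspect: the only surviving boundary term sits at $s=t$, where $T(t-s)=T(0)=\Id$, so it is bounded by $M_i$ rather than $\uM M_i$, which yields the paper's $(M+Ct)e^{\uM L t}\varepsilon$.
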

\begin{proof}
Fix $t \in [0,t_f]$ and set, for the sake of brevity, 
$\delta(t)=\|y^*(t)-z(t)\|_X$ and $f^i(t,y(t))=f(t,y(t),\omega^*(t),v^i)$. Recalling \eqref{eq:piecewisedef} 
and using hypothesis {\normalfont (H$_2$)}, let $\{\tau_0,\tau_1,\ldots,\tau_{K+1}\}$ be the set of partition points of 
the functions $s \mapsto T(t-s)f^i(s,y^*(s))$ as objects in $H^1_{\pw}(0,t;X)$, $i=1,\ldots,N$, so that 
$\tau_0=0$ and $\tau_{K+1}=t$. From the definition of the mild solutions for
\eqref{eq:relaxedconvexifiedproblem} and \eqref{eq:sysforward}, we have
\begin{equation*}
\delta(t) = \left\| \sum_{i=1}^N \int_0^t
T(t-s)f^i(s,y^*(s))\alpha_i^*(s) -
T(t-s)f^i(s,z(s))\beta_i(s)\,ds \right\|_X.
\end{equation*}
Adding
$0=T(t-s)f^i(s,y^*(s))\beta_i(s)-T(t-s)f^i(s,y^*(s))\beta_i(s)$
under the integral, applying the triangular inequality and rearranging terms 
this yields
\begin{equation*}
\begin{aligned}
\delta(t) &\leq \sum_{i=1}^N \left\| \int_0^t T(t-s)[f^i(s,y^*(s))-
f^i(s,z(s))]\beta_i(s)\,ds\right\|_X \\
& \quad + \sum_{i=1}^N \left\| \sum_{k=0}^K \int_{\tau_k}^{\tau_{k+1}} T(t-s)
f^i(s,y^*(s))[\alpha_i^*(s)-\beta_i(s)] ds\right\|_X.
\end{aligned}
\end{equation*}
Now using integration by parts in the second part, we obtain
\begin{equation*}
\begin{aligned}
\delta(t) &\leq \sum_{i=1}^N \left\| \int_0^t T(t-s)[f^i(s,y^*(s))-
f^i(s,z(s))]\beta_i(s)\,ds\right\|_X \\
& \quad +\sum_{i=1}^N \bigg\| \sum_{k=0}^{K} \bigg(
T(t-\tau_{k+1})f^i(\tau_{k+1},y^*(\tau_{k+1}))\int_{0}^{\tau_{k+1}}
\alpha_i^*(s)-\beta_i(s)\,ds\\ &\quad -
T(t-\tau_k)f^i(\tau_k,y^*(\tau_k))\int_{0}^{\tau_k}\alpha_i^*(s)-\beta_i(s)\,ds\\
& \quad - \int_{\tau_k}^{\tau_{k+1}}
\frac{d}{ds}\left(T(t-s)f^i(s,y^*(s))\right)
\int_0^s\alpha_i^*(\vartheta)-\beta_i(\vartheta)\,d\vartheta\,ds\bigg)
\bigg\|_X.
\end{aligned}
\end{equation*}
Then by rearranging terms, noting that the appearing telescopic sum evaluates as
\begin{equation*}
\begin{aligned}
&\sum_{k=0}^{K} \bigg(T(t-\tau_{k+1})f^i(\tau_{k+1},y^*(\tau_{k+1}))\int_{0}^{\tau_{k+1}}
\alpha_i^*(s)-\beta_i(s)\,ds\\
&\quad -
T(t-\tau_k)f^i(\tau_k,y^*(\tau_k))\int_{0}^{\tau_k}\alpha_i^*(s)-\beta_i(s)\,ds\bigg)\\
& = f^i(t,y^*(t))\int_{0}^{t}\alpha_i^*(s)-\beta_i(s)\,ds
\end{aligned}
\end{equation*} 
because of $\tau_0=0$, $\tau_{K+1}=t$, $T(t-t)=\mathrm{Id}$ and
$\int_{0}^{0}\alpha_i^*(\vartheta)-\beta_i(\vartheta)\,d\vartheta = 0$,
and by applying the triangular inequality this estimate simplifies to
\begin{equation*}
\begin{aligned}
\delta(t) &\leq \sum_{i=1}^N \int_0^t
\left\|T(t-s)\right\|_{\Lcal(X)}\|f^i(s,y^*(s))-
f^i(s,z(s))\|_X|\beta_i(s)|\,ds \\ 
& \quad +\sum_{i=1}^N \left\|f^i(t,y^*(t))\right\|_X
\left|\int_{0}^{t} \alpha_i^*(s)-\beta_i(s)\right|\,ds\\ &\quad +
\sum_{i=1}^N \int_0^t
\left\|\frac{d}{ds}\left(T(t-s)f^i(s,y^*(s))\right)\right\|_X
\left|\int_0^s\alpha_i^*(\vartheta)-\beta_i(\vartheta)\,d\vartheta\right|\,ds.
\end{aligned}
\end{equation*}
Then, by definition of $\delta$, $f^i$ and the constant $\uM$, the definition of the constants $L$,
$C$ and $M$ in hypotheses {\normalfont (H$_1$)}--{\normalfont (H$_3$)}, the assumption \eqref{eq:intdiffest} and the fact 
that $\beta_i(t)\leq 1$, this yields
\begin{equation*}
\begin{aligned}
\delta(t) &\leq \uM L \int_0^t \delta(s)\,ds + M \varepsilon +
Ct\varepsilon.
\end{aligned}
\end{equation*}
Finally, using the Gronwall lemma and rearranging terms, we obtain the desired
estimate
\begin{equation*}
\delta(t) \leq \left((M + Ct)e^{\uM L t}\right)\varepsilon.
\end{equation*}
\qed
\end{proof}

Next, we recall from \cite{SagerBockDiehl2011} the following result on integral approximations. 
\begin{lemma}\label{lem:integeralapprox} Let
$\alpha=(\alpha_1,\ldots,\alpha_N)\: [0,t_f] \to [0,1]^N$ be a measurable
function satisfying $\sum_{i=1}^N \alpha_i(t)=1$ for all $t \in [0,t_f]$.
Define a piecewise constant function $\beta\: [0,t_f] \to \{0,1\}^N$ by
\begin{equation}
            \beta_i(t) = p_{i,j},\quad t\in [t_j,t_{j+1}),~i=1,\ldots,N,~j=0,\ldots,n-1
\end{equation}
where for all $i=1,\ldots,N$, $j=0,\ldots,n-1$, $p_{i,j}$ is defined by \eqref{eq:pdef} in Algorithm~\ref{algo:iter} with $p_{i,j}^k=p_{i,j}$.
Then it holds for $\Delta t = \max_{i=1,\ldots,n} \{t_{i}-t_{i-1}\}$
\begin{enumerate}
  \item $\displaystyle \max_{i=1,\ldots,N} \left| \int_0^t \alpha_i(\tau) -
  \beta_i(\tau)\,d\tau \right| \leq (N-1) \Delta t$ for all $t \in [0,t_f]$,
  \item $\displaystyle \sum_{i=1}^N \beta_i(t) = 1$ for all $t \in [0,t_f]$.
\end{enumerate} 
\end{lemma}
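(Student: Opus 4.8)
The plan is to treat the two assertions separately, establishing (2) first since it is immediate and then feeding it into the proof of (1); throughout I would follow the sum-up-rounding analysis from \cite{SagerBockDiehl2011}. For (2), observe that for each fixed $j$ the selection rule \eqref{eq:pdef} activates exactly one index: among the indices attaining the maximum of $\hat p_{\cdot,j}$ the tie-breaking clause (smallest index wins) singles out a unique one, so $\sum_{i=1}^N p_{i,j}=1$ for every $j$ and hence $\sum_{i=1}^N\beta_i(t)=1$ for all $t\in[0,t_f]$.

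For (1), I would introduce the accumulated deviation at the grid points,
\[
\gamma_{i,j} := \int_0^{t_j}\bigl(\alpha_i(\tau)-\beta_i(\tau)\bigr)\,d\tau,\qquad j=0,\ldots,n,
\]
and write $h_j := t_{j+1}-t_j\le\Delta t$. Since $\beta_i\equiv p_{i,l}$ on $[t_l,t_{l+1})$, one has $\int_0^{t_j}\beta_i=\sum_{l<j}p_{i,l}h_l$, so that the quantity in \eqref{eq:pdef} reads $\hat p_{i,j}=\gamma_{i,j}+\int_{t_j}^{t_{j+1}}\alpha_i$ and the update is $\gamma_{i,j+1}=\hat p_{i,j}-p_{i,j}h_j$. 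Two identities drive the estimate: summing over $i$ and using $\sum_i\alpha_i\equiv1$ together with (2) gives $\sum_i\gamma_{i,j}=0$ and, consequently, $\sum_i\hat p_{i,j}=h_j$.

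The core is then a two-sided bound on $\gamma_{i,j}$. For the lower bound I would argue by induction on $j$ that $\gamma_{i,j}\ge-\tfrac{N-1}{N}\Delta t$: if $i$ is not selected at step $j$ then $\gamma_{i,j+1}=\hat p_{i,j}=\gamma_{i,j}+\int_{t_j}^{t_{j+1}}\alpha_i\ge\gamma_{i,j}$, whereas if $i$ is selected then $\hat p_{i,j}=\max_l\hat p_{l,j}\ge\frac1N\sum_l\hat p_{l,j}=\tfrac{h_j}{N}$, so that $\gamma_{i,j+1}=\hat p_{i,j}-h_j\ge-\tfrac{N-1}{N}h_j\ge-\tfrac{N-1}{N}\Delta t$. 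The upper bound then comes for free from the identity $\sum_i\gamma_{i,j}=0$: fixing $i$ and bounding the remaining $N-1$ summands from below yields $\gamma_{i,j}=-\sum_{l\ne i}\gamma_{l,j}\le\tfrac{(N-1)^2}{N}\Delta t\le(N-1)\Delta t$. Combining the two gives $|\gamma_{i,j}|\le(N-1)\Delta t$ at every grid point.

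The remaining step, passing from the grid points to arbitrary $t$, is the one I expect to require the most care if the constant is to stay sharp. For $t\in[t_j,t_{j+1})$ we have $\int_0^t(\alpha_i-\beta_i)=\gamma_{i,j}+\int_{t_j}^t(\alpha_i-p_{i,j})$; since $\alpha_i\in[0,1]$ and $p_{i,j}\in\{0,1\}$ the integrand has constant sign, so $t\mapsto\int_0^t(\alpha_i-\beta_i)$ is monotone on each interval and therefore attains its extrema over $[0,t_f]$ at grid points. Hence the supremum in (1) equals $\max_j|\gamma_{i,j}|\le(N-1)\Delta t$, which is the claimed estimate. A naive bound that simply adds the within-interval variation (at most $\Delta t$) to the grid-point estimate would overshoot $(N-1)\Delta t$; it is precisely the monotonicity observation, combined with the interplay between the per-step lower bound and the conservation identity $\sum_i\gamma_{i,j}=0$, that keeps the constant exactly $(N-1)$.
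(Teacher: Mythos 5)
Your proof is correct, but note that the paper itself does not prove Lemma~\ref{lem:integeralapprox} at all: its ``proof'' is a one-line citation to Theorem~5 of \cite{SagerBockDiehl2011}. So your argument supplies a self-contained derivation where the paper defers to the literature, and it follows essentially the standard sum-up-rounding analysis underlying that reference. The individual steps all check out: the tie-breaking clause in \eqref{eq:pdef} activates exactly the smallest maximizing index, which gives (2); the identities $\sum_i\gamma_{i,j}=0$ and $\sum_i\hat p_{i,j}=h_j$ follow from $\sum_i\alpha_i\equiv 1$ together with (2); the induction is sound, since an unselected index satisfies $\gamma_{i,j+1}=\gamma_{i,j}+\int_{t_j}^{t_{j+1}}\alpha_i\,d\tau\ge\gamma_{i,j}$ while a selected index has $\hat p_{i,j}\ge h_j/N$ (maximum dominates the mean), giving the uniform lower bound $-\tfrac{N-1}{N}\Delta t$; and the conservation identity then converts this into the upper bound $\tfrac{(N-1)^2}{N}\Delta t$, which is in fact slightly sharper at the grid points than the claimed constant $N-1$. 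Your closing observation is also the right one: because $\alpha_i-p_{i,j}$ has constant sign on each cell, the map $t\mapsto\int_0^t(\alpha_i(\tau)-\beta_i(\tau))\,d\tau$ is continuous and piecewise monotone, so its extrema over $[0,t_f]$ occur at grid points and the constant survives the passage from grid points to arbitrary $t$; the naive alternative of adding the within-cell variation would have inflated the bound to roughly $N\Delta t$. The only cosmetic blemish is inherited from the paper's own statement: $\beta$ is only defined on $[0,t_f)$ by the piecewise formula, so assertion (2) at $t=t_f$ requires the obvious convention $\beta(t_f):=\beta(t_f^-)$, which affects neither the integrals in (1) nor anything downstream.
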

\begin{proof}
 See Theorem~5 of \cite{SagerBockDiehl2011}.
\end{proof}

With the above two results we are now in the position to prove Theorem~\ref{thm:convergence}.

\begin{proof}[Theorem~\ref{thm:convergence}]
Let the assumptions of Theorem~\ref{thm:convergence} hold true. First we show that the sequence
$[u^k,v^k,z^k]$ obtained by Algorithm~\ref{algo:iter} is $\varepsilon^k$-feasible for the problem (\MIP).
We have $u^k=\omega^k \in \Uad$ for all $k$ by construction of $\omega^k$ in step {\footnotesize $3$:}, 
$v^k(t) = \sum_{i=1}^N \beta_i^k(t) v^i$, $t \in [0,t_f]$, by construction in step {\footnotesize $6$:}
so $v^k(t) \in \Vad$, $t \in [0,t_f]$, because $\beta_i^k(t) \in \{0,1\}$ for all $i,k$ and $t \in [0,t_f]$ 
as seen from \eqref{eq:omegadef} and \eqref{eq:pdef}. By construction in step {\footnotesize $6$:},
$z^k$ is an $\varepsilon^k$-accurate solution of \eqref{eq:sysforward}. Thus, recalling \eqref{eq:Xidef}, 
$[u^k,v^k,z^k] \in \Xi_{\varepsilon^k}$ for all $k=0,1,2,\ldots$.

Next, we show \eqref{eq:solest0}. To this end, let $y(\cdot;\omega^k,\beta^k)$, $z(\cdot;\omega^*,\beta^k)=y(\cdot;\omega^*,\beta^k)$
and $y(\cdot;\omega^*,\alpha^k)$ denote the mild solutions of \eqref{eq:relaxedconvexifiedproblemB}, \eqref{eq:relaxedconvexifiedproblemC}
with the respective controls. The stability assumption (H$_4$) and the continuity assumption (H$_1$) implies that
\begin{equation}\label{eq:omegaconv}
\begin{aligned}
&\|\alpha^*(t)-\alpha^k(t)\|_{\RR^N} + \|\omega^*(t) - \omega^k(t)\|_U\\ 
&\qquad\qquad \leq C_J|J(\omega^*,\alpha^*,y(\omega^*,\alpha^*))-J(\omega^k,\alpha^k,y(\omega^k,\alpha^k))|\\
&\qquad\qquad \leq C_J \bigl( |J(\omega^*,\alpha^*,y(\omega^*,\alpha^*))-J(\omega^k,\alpha^k,y^k)|+\\
&\qquad\qquad\qquad |J(\omega^k,\alpha^k,y^k)-J(\omega^k,\alpha^k,y(\omega^k,\alpha^k))|\bigr)\\
&\qquad\qquad \leq C_J \biggl(\varepsilon^k + | \phi(y^k(t_f))-\phi(y(t_f;\omega^k,\alpha^k))| + \\
&\qquad\qquad\qquad \int_0^{t_f} |\psi(y^k(t),\omega^k(t)) - \psi(y(t;\omega^k,\alpha^k),\omega^k(t))|\,dt\biggr)\\
&\qquad\qquad \leq C_J (1+\eta + t_f \xi)  \varepsilon^k
\end{aligned}
\end{equation}
for a.\,e. $t \in (0,t_f)$, where we added $0=-J(\omega^k,\alpha^k,y^k)+J(\omega^k,\alpha^k,y^k)$, used the triangular inequality
and that $[\omega^k,\alpha^k,y^k]$ is an $\varepsilon$-accurate optimal solution of \eqref{eq:relaxedconvexifiedproblem}.
Moreover, by fixing $t \in [0,t_f]$, adding $0=-y(t;\omega^k,\beta^k)+y(t;\omega^k,\beta^k)$, 
$0=-z(t;\omega^*,\beta^k)+y(t;\omega^*,\beta^k)$ and $0=-y(t;\omega^*,\alpha^k)+y(t;\omega^*,\alpha^k)$ and using the triangular inequality, 
we see that
\begin{equation}\label{eq:estdeltasum}
  \|z^k(t)-y^*(t)\|_X \leq \delta_1(t) + \delta_2(t) + \delta_3(t) + \delta_4(t)
\end{equation}  
with
\begin{equation*}
 \begin{aligned}
  &\delta_1(t)=\|z^k(t)-y(t;\omega^k,\beta^k)\|_X,&&\delta_2(t)=\|y(t;\omega^k,\beta^k)-z(t;\omega^*,\beta^k)\|_X\\
  &\delta_3(t)=\|z(t;\omega^*,\beta^k)-y(t;\omega^*,\alpha^k)\|_X,&&\delta_4(t)=\|y(t;\omega^*,\alpha^k)-y^*(t)\|_X.
 \end{aligned}
\end{equation*}
Observe that $\delta_1(t) \leq \varepsilon^k$, because $z^k$ is an $\varepsilon^k$-accurate solution of \eqref{eq:sysforward}
and thus of \eqref{eq:relaxedconvexifiedproblemB}, \eqref{eq:relaxedconvexifiedproblemC} with controls $\omega^k,\alpha^k$.
By definition of the mild solution and using (H$_1$), we have
\begin{equation}
\begin{aligned}
 \delta_2(t) &\leq \sum_{i=1}^N \int_0^t\|T(t-s)\|_{\Lcal(X)} \|\beta^k_i(s)\| \|(f(s,y(s;\omega^k,\beta^k),\omega^k(s),v^i)\\
 &\qquad\qquad\qquad\qquad- f(s,z(s;\omega^*,\beta^k),\omega^*(s),v^i))\|_X\,ds\\
 &\leq \bar{M}L \int_0^t \delta_2(s) + \|\omega^k(s)-\omega^*(s)\|_U\,ds.
 \end{aligned}
\end{equation}
Using \eqref{eq:omegaconv}, the Gronwall inequality implies that 
\begin{equation}
\delta_2(t) \leq C_J(1+\eta+\xi)e^{\bar{M}Lt}\varepsilon^k.
\end{equation}
From Lemma~\ref{lem:integeralapprox} with $\alpha=\alpha^k$, $\beta=\beta^k$ and $\Delta t = \Delta t^k$ with $\Delta t^k$ from \eqref{eq:deltatdef}, we get that
\begin{equation}
 \max_{i=1,\ldots,N} \left|\int_0^{t} \alpha^k_i(\tau)-\beta_i^k(\tau)\,d\tau\right| \leq (N-1) \Delta t^k,~t\in [0,t_f].
\end{equation}
Moreover, Lemma~\ref{lem:solutionest} used with $y^*=y(\cdot;\omega^*,\alpha^k)$, $\alpha^*=\alpha^k$, $\beta=\beta^k$ and $\varepsilon=(N-1)\Delta t^k$ implies that
\begin{equation}
 \delta_3(t)=\|z(t;\omega^*,\beta^k)-y(t;\omega^*,\alpha^k)\|_X \leq \left((M + Ct)e^{\uM L t}\right)(N-1)\Delta t^k.
\end{equation}
Again by definition of the mild solution we have
\begin{equation}
\begin{aligned}
 \delta_4(t) \leq~&\sum_{i=1}^N\int_0^t \|T(t-s)\|_{\Lcal(X)} \| \alpha^k_i(s) f(s,y(s,\omega^*,\alpha^k),\omega^*(s),v^i)\\
 &\qquad\qquad\qquad\qquad-\alpha_i^*(s) f(s,y(s;\omega^*,\alpha^*),\omega^*(s),v^i)\|_X\,ds.
\end{aligned}
\end{equation}
Adding $0=\alpha^k(s)(-f(s,y(s;\omega^*,\alpha^*),\omega^*(s),v^i)+f(s,y(s;\omega^*,\alpha^*),\omega^*(s),v^i))$ under the 
integral, we obtain
\begin{equation}
 \begin{aligned}
 \delta_4(t) \leq~&\bar{M} \sum_{i=1}^N \int_0^t \|\alpha_i^k(s) [f(s,y(s;\omega^*,\alpha^k),\omega^*(s),v^i)\\
 &\qquad -f(s,y(s,\omega^*,\alpha^*),\omega^*(s),v^i)] \\
 &\qquad + [\alpha_i^k(s)-\alpha^*_i(s)]f(s,y(s;\omega^*,\alpha^*),\omega^*(s),v^i)\|_X\,ds\\
 &\leq \bar{M} L \int_0^t \delta_4(s) + M |\alpha^k_i(s)-\alpha^*_i(s)|\,ds.
 \end{aligned}
\end{equation}
Using again \eqref{eq:omegaconv} and the Gronwall inequality we obtain that 
\begin{equation}
\delta_4(t) \leq M C_J(1+\eta+\xi) e^{\bar{M}Lt}\varepsilon^k.
\end{equation}
Thus, summing up the estimates for $\delta_1(t),\ldots,\delta_4(t)$ and rearranging terms we obtain from \eqref{eq:estdeltasum} that
for all $t \in [0,t_f]$ 
\begin{equation}\label{eq:solest0proof}
 \|z^k(t)-y^*(t)\|_X \leq C_1 \varepsilon^k + C_2 (N-1)\Delta t^k
\end{equation}
with $C_1=(C_J(M+1)(1+\eta+\xi)e^{t_f \bar{M}L}+1)$ and $C_2=(M + t_f C)e^{t_f \uM L}$.
This proves \eqref{eq:solest0}.

By definition of the cost function \eqref{eq:cost} we get from the triangular inequality that
\begin{equation}
\begin{aligned}
 &|J(\omega^*,\alpha^*,y^*) - J(u^k,v^k,z^k)| \leq |\phi(y^*(t))-\phi(z^k(t))|\\
 &\qquad\qquad\qquad\qquad+ \int_0^{t_f} | \psi(y^*(t),\omega^*(t)) - \psi(z^k(t),\omega^k(t))|\,dt
\end{aligned}
\end{equation}
so that using hypothesis (H$_1$), \eqref{eq:omegaconv} and \eqref{eq:solest0proof} we obtain
\begin{equation}\label{eq:Jest0proof}
 |J(\omega^*,\alpha^*,y^*) - J(u^k,v^k,z^k)| \leq C_3 \varepsilon^k + C_4 (N-1)\Delta t^k
\end{equation}
with the constants $C_3=(\eta + t_f \xi) C_1 + t_f \xi C_J$ and $C_4=(\eta+t_f\xi)C_2(t_f)$. This proves \eqref{eq:Jest0}.

Next, suppose that the main loop in Algorithm~\ref{algo:iter} terminates in step {\footnotesize $4$:} or in step {\footnotesize $7$:}. 
In the first case, the termination criterion implies that
\begin{equation}
 |J(u^*,v^*,z^*)-J(\omega^*,\alpha^*,y^*)|=|J(\omega^k,\alpha^k,y^k)-J(\omega^*,\alpha^*,y^*)| \leq \varepsilon^k \leq \varepsilon
\end{equation}
for some $k$, because $J(\omega^k,\alpha^k,z^k)$ is an $\varepsilon^k$-accurate optimal solution of \eqref{eq:relaxedconvexifiedproblem}. 
Similarly, in the second case the termination criterion implies that
\begin{equation}
\begin{aligned}
 |J(u^*,v^*,z^*)-J(\omega^*,\alpha^*,y^*)| \leq ~&|J(u^*,v^*,z^*)-J(\omega^k,\alpha^k,y^k)|\\
 &\qquad +|J(\omega^k,\alpha^k,y^k)-J(\omega^*,\alpha^*,y^*)|\\ 
 \leq~& |J^k - J^k_\rel| + \varepsilon^k \leq \frac{\varepsilon}{2} + \frac{\varepsilon}{2} \leq \varepsilon
 \end{aligned}
\end{equation}
for some $k$. This proves \eqref{eq:MIOCPoptValueEst} under the assumption that Algorithm~\ref{algo:iter} terminates.

Finally suppose that Algorithm~\ref{algo:iter} loops infinitely many times, that is,  
\begin{equation}\label{eq:contradiction}
 |J^k_\rel-J^k| > \frac{\varepsilon}{2}~\text{or}~\varepsilon^k>\frac{\varepsilon}{2}~\text{for all}~k=0,1,2,\ldots
\end{equation}
By adding $0=-y^*(t)+y^*(t)$ and using that $y^k$ is an $\varepsilon^k$-accurate optimal solution, we obtain from 
\eqref{eq:solest0proof} that
\begin{equation}\label{eq:zkykcompare}
\begin{aligned}
 \|z^k(t) - y^k(t)\|_X \leq &\|z^k(t)-y^*(t)\|_X + \|y^*(t)-y^k(t)\|_X \\
 & \leq C_1\varepsilon^k + C_2(N-1)\Delta t^k + \varepsilon^k.
\end{aligned} 
\end{equation}
Using that $\varepsilon^k \to 0$ and $\Delta t^k \to 0$ as $k \to \infty$ by assumption, we see from \eqref{eq:zkykcompare} 
that $\sup_{t \in [0,t_f]} \|z^k(t) - y^k(t)\|_X \to 0$ as $k \to \infty$.
By definition of $J^k_\rel$ and $J^k$ and using the triangular inequality we have
\begin{equation} \label{eq:JkrelJkest}
\begin{aligned}
 |J^k_\rel-J^k| =~&|J(\omega^k,\alpha^k,y^k)-J(\omega^k,\beta^k,z^k)| \leq |\phi(y^k(t_f))-\phi(z^k(t_f))| \\
 &\qquad + \int_0^{t_f} | \psi(y^k(t),\omega^k(t))-\psi(z^k(t),\omega^k(t))|\,dt,
\end{aligned}
\end{equation}
so that $\sup_{t \in [0,t_f]} \|z^k(t) - y^k(t)\|_X \to 0$ as $k \to \infty$ implies by continuity of $\phi$ and $\psi$ that also
$|J^k_\rel-J^k| \to 0$ as $k \to \infty$. Together with the assumption that 
$\varepsilon^k \to 0$ as $k \to \infty$ this contradicts \eqref{eq:contradiction} and completes the proof. \qed
\end{proof}

Theorem~\ref{thm:convergence} can be seen as a performance analysis of the relaxation method proposed in Algorithm~\ref{algo:iter}. 
The estimates \eqref{eq:solest0} and \eqref{eq:Jest0} prove a bilinear dependency of the mixed-integer control approximation 
error for the differential state and the optimal value in terms of the chosen maximal integer-control 
discretization mesh size $\Delta t^k$ and accuracy $\varepsilon^k$. This relates to the convergence 
speed of Algorithm~\ref{algo:iter} in terms of the chosen refinements for $\Delta t^k$ and $\varepsilon^k$.
Note that the estimates \eqref{eq:solest0} and \eqref{eq:Jest0} suggest to choose $\Delta t^k$ and $\varepsilon^k$ 
of the same order. On the other hand, Theorem~\ref{thm:convergence} can be regarded as an existence result of
suboptimal solutions for (\MIP). To emphasize this, we formulate the precise statement explicitly.

\begin{corollary}\label{cor:convergence} Under the hypothesis {\normalfont (H$_0$)}--{\normalfont (H$_3$)} there exists for every $\varepsilon>0$ a feasible
solution $(u^{\varepsilon},v^{\varepsilon},z^{\varepsilon}) \in U_{[0,t_f]} \times V_{[0,t_f]} \times X_{[0,t_f]}$
of problem (\MIP) satisfying
\begin{equation}\label{eq:Jest1}
  J(u^{\varepsilon},v^{\varepsilon},z^{\varepsilon}) \leq J(\omega^*,\alpha^*,y^*) + \varepsilon,
\end{equation}
where $(\omega^*,\alpha^*,y^*)$ is the optimal solution of the relaxed problem~\eqref{eq:relaxedconvexifiedproblem}.
\end{corollary}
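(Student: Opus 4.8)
The plan is to recognize that Corollary~\ref{cor:convergence} is exactly the special case of Theorem~\ref{thm:convergence} in which the continuous control is \emph{not} perturbed and the forward solve is exact. Concretely, in the notation of the proof of Theorem~\ref{thm:convergence} we will take the accuracy $\varepsilon^k=0$ and keep $(\omega^k,\alpha^k)=(\omega^*,\alpha^*)$ rather than an approximate relaxed solution. This makes the contributions $\delta_1$, $\delta_2$ and $\delta_4$ vanish identically, so that only $\delta_3$ survives, and $\delta_3$ is controlled by Lemma~\ref{lem:solutionest} together with Lemma~\ref{lem:integeralapprox}. This is precisely why hypothesis (H$_4$) is not needed here: that hypothesis entered the proof of Theorem~\ref{thm:convergence} only through the estimate \eqref{eq:omegaconv}, which bounds the deviation in $\omega$ and $\alpha$ caused by using an \emph{inexact} relaxed solution; since we now use the exact optimizer $(\omega^*,\alpha^*)$ and set $u^\varepsilon:=\omega^*$, no such stability bound is required.

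The construction is as follows. Fix $\varepsilon>0$ and let $(\omega^*,\alpha^*,y^*)$ be the optimal solution of the relaxed problem \eqref{eq:relaxedconvexifiedproblem}, which exists by (H$_0$). By feasibility, $\alpha^*$ is measurable with $\alpha^*(t)\in[0,1]^N$ and $\sum_{i=1}^N\alpha_i^*(t)=1$ by \eqref{eq:relaxedconvexifiedproblemE}--\eqref{eq:relaxedconvexifiedproblemF}, so the hypotheses of Lemma~\ref{lem:integeralapprox} are met. I would then choose a grid $\Gcal$ with mesh $\Delta t$ (to be fixed below) and apply Lemma~\ref{lem:integeralapprox} with $\alpha=\alpha^*$ to obtain a piecewise constant $\beta=(\beta_1,\ldots,\beta_N)\colon[0,t_f]\to\{0,1\}^N$ satisfying $\sum_{i=1}^N\beta_i(t)=1$ and
\[
\max_{i=1,\ldots,N}\sup_{t\in[0,t_f]}\left|\int_0^t\alpha_i^*(\tau)-\beta_i(\tau)\,d\tau\right|\leq (N-1)\Delta t .
\]
Setting $u^\varepsilon:=\omega^*$ and $v^\varepsilon(t):=\sum_{i=1}^N\beta_i(t)\,v^i$, the integer condition on $\beta$ gives $v^\varepsilon(t)\in\Vad$, and I take $z^\varepsilon\in X_{[0,t_f]}$ to be the exact mild solution of \eqref{eq:sysforward} with these controls. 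Thus $(u^\varepsilon,v^\varepsilon,z^\varepsilon)$ is feasible for (\MIP).

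For the error estimate I would invoke Lemma~\ref{lem:solutionest} with $y^*$ the given optimal state, the fixed control $\omega^*$, the rounding $\beta$, and $\varepsilon$ in the lemma replaced by $(N-1)\Delta t$, which yields
\[
\|y^*(t)-z^\varepsilon(t)\|_X\leq \bigl((M+Ct)e^{\uM L t}\bigr)(N-1)\Delta t\leq C_2\,(N-1)\Delta t,\quad t\in[0,t_f],
\]
with $C_2=(M+t_fC)e^{t_f\uM L}$. Since $u^\varepsilon=\omega^*$, the $\omega$-difference term in the Lipschitz estimate for $\psi$ from (H$_1$) drops out, and therefore
\[
|J(\omega^*,\alpha^*,y^*)-J(u^\varepsilon,v^\varepsilon,z^\varepsilon)|\leq \eta\|y^*(t_f)-z^\varepsilon(t_f)\|_X+\xi\int_0^{t_f}\|y^*(t)-z^\varepsilon(t)\|_X\,dt\leq C_4\,(N-1)\Delta t,
\]
with $C_4=(\eta+t_f\xi)C_2$. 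Choosing $\Delta t$ small enough that $C_4(N-1)\Delta t\leq\varepsilon$ (for $N=1$ the bound is vacuous since then $\beta=\alpha^*\equiv 1$ and $z^\varepsilon=y^*$) gives \eqref{eq:Jest1}. There is no genuine obstacle here; the only points requiring care are verifying that $\alpha^*$ satisfies the hypotheses of Lemma~\ref{lem:integeralapprox} and observing that fixing $\omega=\omega^*$ is exactly what allows (H$_4$) to be dispensed with, so that the argument rests solely on Lemmas~\ref{lem:solutionest} and \ref{lem:integeralapprox} under (H$_1$)--(H$_3$).
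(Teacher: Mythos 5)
Your proposal is correct and follows essentially the same route as the paper: the paper's proof simply invokes Theorem~\ref{thm:convergence} with $\varepsilon^k=0$, observes that then $(\omega^k,\alpha^k,y^k)=(\omega^*,\alpha^*,y^*)$ so that (H$_4$) is never needed, and reads off \eqref{eq:Jest1} from \eqref{eq:MIOCPoptValueEst}. Your version merely unpacks this reference explicitly --- noting that $\delta_1,\delta_2,\delta_4$ vanish and only the $\delta_3$ term, controlled by Lemma~\ref{lem:solutionest} together with Lemma~\ref{lem:integeralapprox}, survives --- which is exactly the content the paper's proof appeals to.
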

\begin{proof}
 Apply Theorem~\ref{thm:convergence} with $\varepsilon^k=0$ for all $k$. Then, $(\omega^k,\alpha^k,y^k)=(\omega^*,\alpha^*,y^*)$
 and it can be seen from proof of Theorem~\ref{thm:convergence} %in Section~\ref{sec:proofMainTheorem} 
 that (H$_4$) is then not needed for the estimate \eqref{eq:Jest0}. Moreover, $\Xi_0$ is contained in the feasible set of problem \eqref{eq:relaxedconvexifiedproblem}
 and thus $J(\omega^*,\alpha^*,y^*) \leq J(u,v,z)$ for all $(u,v,z) \in \Xi_0$. Thus, \eqref{eq:Jest1} follows from \eqref{eq:MIOCPoptValueEst}. \qed
\end{proof}

Hypothesis~(H$_0$) can be checked by classical arguments, cf., e.\,g., \cite{Troeltzsch2010}. Hypothesis (H$_1$)--(H$_3$) are needed to 
estimate the proximity of mixed-integer solutions of (MIOCP) to the optimal solution of 
the relaxed problem \eqref{eq:relaxedconvexifiedproblem} while hypothesis (H$_4$) guarantees in a sense the proximity of 
the $\varepsilon$-accurate optimal solutions of \eqref{eq:relaxedconvexifiedproblem} to the optimal ones.
Hypothesis~(H$_1$) and (H$_3$) are standard assumptions and can be weakend to appropriate `local' versions using standard arguments. This
can also tighten the estimates in Theorem~\ref{thm:convergence}. The last conclusion of Theorem~\ref{thm:convergence} concerning the termination of 
Algorithm~\ref{algo:iter} and Corollary~\ref{cor:convergence} 
even hold for the functions $\phi$ and $\psi$ just continuous as it can be seen from the respective proofs. 
Hypothesis (H$_4$) can be verified for problems that are well-posed in the Tikhonov sense, cf. the discussion in 
\cite[Section~4]{LeugeringKogut2011}. For sufficiently regularized parabolic problems it could also be checked 
using methods as in \cite{MalanowskiTroeltzsch1999}. Alternatively, instead of invoking Theorem~\ref{thm:convergence}, 
Corollary~\ref{cor:convergence} or a weaker conclusion presented in Proposition~\ref{prop:convergenceWeaker} below may be used where 
(H$_4$) is not needed. 

Hypothesis {\normalfont (H$_2$)} of Theorem~\ref{thm:convergence} clearly imposes certain regularity
assumptions on the linear operator $A$ generating the semigroup $\{T(t)\}_{t \geq 0}$, the
function $f$, but also on the time regularities of the optimal control functions of the relaxed problem \eqref{eq:relaxedconvexifiedproblem} 
in $U_{[0,t_f]}$ and $\tilde{V}_{[0,t_f]}$. The main difficulty with proving (H$_2$) is that $y^*$ as a solution of \eqref{eq:vc} with $A$ unbounded may 
only be continuous and not absolutely continuous in time, hence not necessarily differentiable almost 
everywhere as this is always true when $A=0$ (with $T(\cdot)=\Id$) and $X$ is a finite dimensional space. This can be delicate in particular for
nonlinear systems. We will therefore exemplary discuss hypothesis {\normalfont (H$_2$)} in Example~\ref{ex:lotkadiff2d} below for the case of a semilinear 
system where $A$ is the generator of an analytic semigroup. A more general analysis is possible for linear systems 
\begin{equation}\label{eq:linearsysuc}
 \dot{z}(t)=Az(t)+f(t,u(t),v(t)),\quad z(0)=z_0
\end{equation}
when $f$ is sufficiently smooth. We formulate this as an auxiliary result.

\begin{proposition}\label{prop:OnH2Auniformlybounded}
Consider the problem (\MIP) with equation \eqref{eq:sysuc} replaced by equation \eqref{eq:linearsysuc},
let $\bar{M}=\sup_{t \in [0,t_f]}\|T(t)\|_{\Lcal(X)}$ and suppose that the functions 
$g^i\:[0,t_f] \to X$ defined by $g^i(t)=f(t,\omega^*(t),v^i)$, $i=1,\ldots,N$,
satisfy the following conditions.
\begin{itemize}
 \item[(i)] $g^i(t) \in D(A)$ for a.\,e. $t \in [0,t_f]$ and there exists constants $\bar{L}^i$
such that
\begin{equation*}
\esssup_{t \in [0,t_f]}\left\|A g^i(t)\right\|_X \leq \bar{L}^i.
\end{equation*}
 \item[(ii)] $g^i$ is differentiable for a.\,e. $t \in [0,t_f]$ and there exist constants $\bar{C}^i$
such that 
\begin{equation*}
\esssup_{t \in [0,t_f]}\left\|\frac{d}{dt}g^i(t)\right\|_X \leq \bar{C}^i.
\end{equation*}
\end{itemize}
Then hypothesis {\normalfont (H$_2$)} of Theorem~\ref{thm:convergence} holds with $C_i:=\bar{M}\left(\bar{C}^i+\bar{L}^i \right)$.
\end{proposition}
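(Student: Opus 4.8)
The plan is to reduce (H$_2$) to a direct computation of the derivative of the map $s \mapsto T(t-s)g^i(s)$ and then bound it term by term. First I would note that, since the right-hand side of the linear equation \eqref{eq:linearsysuc} is state-independent, we have $f(s,y^*(s),\omega^*(s),v^i)=f(s,\omega^*(s),v^i)=g^i(s)$, so that for fixed $t$ the function appearing in (H$_2$) is exactly $h^i(s):=T(t-s)g^i(s)$. It therefore suffices to show that $h^i \in H^1_{\pw}(0,t_f;X)$ and that $\|\tfrac{d}{ds}h^i(s)\|_X \leq C_i$ a.e., with $C_i=\bar{M}(\bar{C}^i+\bar{L}^i)$.

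The core of the argument is a product rule for $h^i$, which I would establish via difference quotients. Writing
\[
\frac{h^i(s+\tau)-h^i(s)}{\tau} = T(t-s-\tau)\,\frac{g^i(s+\tau)-g^i(s)}{\tau} + \frac{T(t-s-\tau)-T(t-s)}{\tau}\,g^i(s),
\]
I would treat the two summands separately as $\tau\to 0$. For the first, strong continuity of $\{T(t)\}_{t\geq 0}$ together with the uniform bound $\|T(t-s-\tau)\|_{\Lcal(X)}\leq\bar{M}$ and the differentiability of $g^i$ from (ii) gives convergence to $T(t-s)\tfrac{d}{ds}g^i(s)$. For the second, assumption (i) gives $g^i(s)\in D(A)$, so the standard semigroup identity $\tfrac{d}{dr}T(r)x=T(r)Ax$ on $D(A)$ (evaluated at $r=t-s>0$) yields convergence to $-T(t-s)Ag^i(s)$. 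Hence, a.e. in $0<s<t<t_f$,
\[
\frac{d}{ds}h^i(s) = T(t-s)\left(\frac{d}{ds}g^i(s)-Ag^i(s)\right),
\]
and the triangle inequality with (i), (ii) and $\|T(t-s)\|_{\Lcal(X)}\leq\bar{M}$ gives $\|\tfrac{d}{ds}h^i(s)\|_X\leq\bar{M}(\bar{C}^i+\bar{L}^i)=C_i$.

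It then remains to confirm the regularity claim. The candidate derivative $s\mapsto T(t-s)(\tfrac{d}{ds}g^i(s)-Ag^i(s))$ is strongly measurable (as the action of the strongly continuous family $T(t-\cdot)$ on the essentially bounded, hence strongly measurable, maps $\tfrac{d}{ds}g^i$ and $Ag^i$), and being bounded by $C_i$ it lies in $L^2(0,t_f;X)$, so square-integrability is automatic. To identify it as the weak derivative I would verify, on each subinterval on which $g^i$ is absolutely continuous, the integration-by-parts identity $\int h^i\,\dot\varphi = -\int \tfrac{d}{ds}h^i\,\varphi$ against scalar test functions $\varphi$, which upgrades the pointwise a.e. derivative to a weak derivative in the Sobolev--Bochner sense; the finitely many partition points then supply the piecewise structure demanded by $H^1_{\pw}$. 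The step I expect to be the main obstacle is precisely this last one: almost-everywhere differentiability with an essentially bounded derivative does not by itself imply absolute continuity, even in $\RR$ (cf.\ the Cantor function), so assumption (ii) must be read in its absolutely-continuous sense and the product rule justified at the level of weak, rather than merely pointwise, derivatives. Everything else reduces to the semigroup identity on $D(A)$ and the uniform operator bound $\bar{M}$.
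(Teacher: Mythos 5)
Your proof is correct and follows essentially the same route as the paper: the product rule for $s \mapsto T(t-s)g^i(s)$ combined with the semigroup identity $\frac{d}{ds}T(t-s)x = -T(t-s)Ax$ on $D(A)$, followed by the triangle inequality and the bound $\bar{M}$. The paper's own proof is just this three-line computation; your additional care about the $H^1_{\pw}$ membership, strong measurability, and the absolutely-continuous reading of assumption (ii) addresses genuine regularity points that the paper silently glosses over, and does so correctly.
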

\begin{proof}
From condition (i) we get from the chain rule that
\begin{equation*}%\label{eq:chainrule_one}
\frac{d}{ds} T(t-s)g(t) = T(t-s)\frac{d}{ds}g(t)-T(t-s)Ag(t)
\end{equation*}
for all $i=1,\ldots,N$ and thus, by taking the norm, applying the triangular inequality and using
the definition of the constants in (i) and (ii) we obtain
\begin{equation*}
 \left\|\frac{d}{ds} T(t-s)f(s,\omega^*(s),v^i)\right\|_X \leq \bar{M}\left(\bar{C}^i+\bar{L}^i \right).
\end{equation*} 
\qed
\end{proof}
The conditions (i) and (ii) are a natural extension of the differentiability assumptions imposed in 
\cite[Corollary~6]{SagerBockDiehl2011} for the case when $A=0$ and $X=\RR^n$. In Section~\ref{sec:examples}, 
we will use such arguments in order to verify hypothesis (H$_2$) in Example~\ref{ex:heatrelax2d}.

We note that the relaxation method works under much weaker assumptions with slightly weaker conclusions.
Suppose that we replace the main hypothesis {\normalfont (H$_0$)} by the following much weaker hypothesis.
\vspace*{0.5em}
\begin{list}{\labelitemi}{\itemsep=1em\leftmargin=2em}
 \item[{\normalfont (H$_0'$)}] Problem \eqref{eq:relaxedconvexifiedproblem} has a feasible
  solution in $U_{[0,t_f]} \times \tilde{V}_{[0,t_f]} \times X_{[0,t_f]}$.
\end{list}
\vspace*{0.5em}
Then, we still get the following result, being useful in particular in many practical 
applications when the solutions $[\omega^k,\alpha^k,y^k]$ found in step
{\footnotesize $3$:} only satisfy, for example, necessary optimality conditions (up to an accuracy of
$\varepsilon^k$). The following conclusion can then still be very useful in order to provide
bounds for the mixed-integer problem (\MIP) and we will take advantage of this
when discussing the examples in Section~\ref{sec:examples}. Nevertheless, an approximation of
a globally optimal solution of problem (\MIP) can of course only be obtained
when the relaxed problem in step {\footnotesize $3$:} is solved to
$\varepsilon^k$-global optimality.

\begin{proposition}\label{prop:convergenceWeaker} {\nopagebreak Under the
hypothesis {\normalfont (H$_0'$)}, consider Algorithm~\ref{algo:iter} where we replace 
step {\footnotesize $3$:} by
\begin{list}{\labelitemi}{\itemsep=1em\leftmargin=1.5em}
 \item[{\footnotesize $3'$:}] \quad Select some $[\omega^k,\alpha^k,y^k]$ such that $\omega^k$ and $\alpha^k$
 is feasible for problem \eqref{eq:relaxedconvexifiedproblem} and $y^k$ is an $\varepsilon^k$-accurate solution 
 of \eqref{eq:relaxedconvexifiedproblemB}, \eqref{eq:relaxedconvexifiedproblemC}.
\end{list}}
Assume hypothesis {\normalfont (H$_1$)} and that the hypothesis {\normalfont (H$_2$)} and {\normalfont (H$_3$)} hold 
with $\omega^*=\omega^k$ and $y^*=y(\cdot;\omega^k,\alpha^k)$ with constants $C^k$ and $M^k$ for all $k=0,1,2,\ldots$
and $y(\cdot;\omega^k,\alpha^k)$ being the mild solution of \eqref{eq:relaxedconvexifiedproblemB}, \eqref{eq:relaxedconvexifiedproblemC}. 
Further assume that, as in Theorem~\ref{thm:convergence}, $\varepsilon^k \to 0$ and that the sequence $\{\Gcal^{k}\}_k$ 
is such that $\Delta t^k \to 0$. Define the constants $C_1 = 2$, $C_2^k=\left((M^k + t_f C^k)e^{t_f \uM L}\right)$, 
$C_3=(\eta + t_f \xi)C_1$ and $C_4^k=(\eta +t_f \xi) C^k_2$, where $\uM=\sup_{t \in [0,t_f]} \|T(t)\|_{\Lcal(X)}$ 
and the constants $\eta$, $\xi$ and $L$ are given by hypothesis {\normalfont (H$_1$)}. 
Then, $[u^k,v^k,z^k]$ defined by Algorithm~\ref{algo:iter} is in $\Xi_{\varepsilon^k}$ for all $k=0,1,2,\ldots$ and 
satisfies the estimates
\begin{equation}\label{eq:solest0prop}
 \|y^k(t)-z^k(t)\|_X \leq C_1 \varepsilon^k + C_2^k(N-1) \Delta t^k,\quad t \in [0,t_f],
\end{equation}
and 
\begin{equation}\label{eq:Jest0prop}
\begin{aligned}
 &|J(\omega^k,\alpha^k,y^k) - J(u^k,v^k,z^k)| \leq C_3 \varepsilon^k + C_4^k (N-1)\Delta t^k.
 \end{aligned}
\end{equation}
In particular, if $M^k$ and $C^k$ can be chosen independently of $k$, then Algorithm~\ref{algo:iter} terminates in a 
finite number of steps $\kappa$ with an $\varepsilon$-feasible mixed-integer solution $[u^*,v^*,z^*] \in \Xi_{\varepsilon}$ 
of Problem (\MIP) satisfying the estimate 
\begin{equation}\label{eq:MIOCPoptValueEstprop}
|J(\omega^\kappa,\alpha^\kappa,y^\kappa) - J(u^*,v^*,z^*)| \leq \varepsilon,
\end{equation}
where $\varepsilon>0$ was chosen arbitrarily in step {\footnotesize\normalfont $1$:}.
\end{proposition}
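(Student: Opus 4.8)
The plan is to mimic the proof of Theorem~\ref{thm:convergence} closely, since Proposition~\ref{prop:convergenceWeaker} differs only in that we compare against the selected feasible solution $[\omega^k,\alpha^k,y^k]$ rather than against a single fixed optimal solution $[\omega^*,\alpha^*,y^*]$, and consequently hypothesis~(H$_4$) is dropped. First I would verify the feasibility claim $[u^k,v^k,z^k] \in \Xi_{\varepsilon^k}$ verbatim as in the main proof: the construction in steps~{\footnotesize $6$:} forces $v^k(t) \in \Vad$ because $\beta^k_i(t) \in \{0,1\}$ by \eqref{eq:omegadef}--\eqref{eq:pdef}, $u^k = \omega^k$ is admissible by step~{\footnotesize $3'$:}, and $z^k$ is $\varepsilon^k$-accurate for \eqref{eq:sysforward} by step~{\footnotesize $6$:}.

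The core estimate \eqref{eq:solest0prop} is where the simplification shows up. In the main proof the quantity $\|z^k(t)-y^*(t)\|_X$ was split into four pieces $\delta_1,\ldots,\delta_4$, and pieces $\delta_2$ and $\delta_4$ measured the drift caused by replacing $(\omega^k,\alpha^k)$ by the fixed optimal $(\omega^*,\alpha^*)$; these were controlled using (H$_4$) and contributed the $C_J$-dependent terms in $C_1$. Here I compare $z^k$ directly with $y^k$, so those two terms disappear entirely. Concretely I would write
\begin{equation*}
\|z^k(t)-y^k(t)\|_X \leq \|z^k(t)-y(t;\omega^k,\beta^k)\|_X + \|y(t;\omega^k,\beta^k)-y(t;\omega^k,\alpha^k)\|_X + \|y(t;\omega^k,\alpha^k)-y^k(t)\|_X,
\end{equation*}
where $y(\cdot;\omega^k,\cdot)$ denotes the mild solution of \eqref{eq:relaxedconvexifiedproblemB}--\eqref{eq:relaxedconvexifiedproblemC}. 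The first and third terms are each $\le \varepsilon^k$ since $z^k$ and $y^k$ are $\varepsilon^k$-accurate solutions of \eqref{eq:sysforward} and of \eqref{eq:relaxedconvexifiedproblemB}--\eqref{eq:relaxedconvexifiedproblemC} respectively (here $y(\cdot;\omega^k,\alpha^k)$ is the exact mild solution), giving the $C_1 = 2$. The middle term is precisely the setting of Lemma~\ref{lem:solutionest}, applied with $\omega^* = \omega^k$, $\alpha^* = \alpha^k$, $\beta = \beta^k$, and $y^* = y(\cdot;\omega^k,\alpha^k)$: hypotheses (H$_2$) and (H$_3$) are assumed to hold at exactly this point with constants $C^k, M^k$, and Lemma~\ref{lem:integeralapprox} supplies the integral bound $\max_i \sup_t |\int_0^t \alpha^k_i - \beta^k_i| \le (N-1)\Delta t^k$, so the lemma yields the bound $((M^k+t_f C^k)e^{t_f \uM L})(N-1)\Delta t^k = C_2^k (N-1)\Delta t^k$. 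Summing gives \eqref{eq:solest0prop}. The cost estimate \eqref{eq:Jest0prop} then follows by the same triangle-inequality computation as in the main proof, using (H$_1$) to convert the state gap \eqref{eq:solest0prop} into a functional gap through $\eta$ and $t_f\xi$; note there is no $\omega$-drift term now, so no $C_J$ appears in $C_3 = (\eta+t_f\xi)C_1$.

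The termination claim is then essentially the contradiction argument from the main proof, but it is exactly here that I expect the only genuine subtlety, and it explains the hypothesis that $M^k$ and $C^k$ be choosable independently of $k$. Assuming Algorithm~\ref{algo:iter} loops forever, \eqref{eq:contradiction} must hold for all $k$; but \eqref{eq:solest0prop} gives $\sup_t \|z^k(t)-y^k(t)\|_X \le C_1\varepsilon^k + C_2^k(N-1)\Delta t^k$, and \emph{only if} $C_2^k$ stays bounded (via uniform $M^k, C^k$) does $\varepsilon^k \to 0$ and $\Delta t^k \to 0$ force this supremum to zero. Then the estimate \eqref{eq:JkrelJkest}, valid verbatim, together with continuity of $\phi$ and $\psi$, yields $|J^k_\rel - J^k| \to 0$, contradicting \eqref{eq:contradiction} and proving finite termination. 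At termination (step~{\footnotesize $4$:} or {\footnotesize $7$:}) the bound \eqref{eq:MIOCPoptValueEstprop} follows exactly as \eqref{eq:MIOCPoptValueEst} did, with $[\omega^*,\alpha^*,y^*]$ replaced by the selected $[\omega^\kappa,\alpha^\kappa,y^\kappa]$. The main obstacle is thus not a new estimate but bookkeeping: tracking that (H$_2$)--(H$_3$) are now invoked at the iterate-dependent point $y(\cdot;\omega^k,\alpha^k)$ with $k$-dependent constants, and recognizing that uniform-in-$k$ bounds on those constants are exactly what is needed to push the convergence argument through.
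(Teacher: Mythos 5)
Your proposal is correct and follows essentially the same route as the paper's proof: the same three-term triangle-inequality decomposition around the exact mild solutions $y(\cdot;\omega^k,\alpha^k)$ and $z(\cdot;\omega^k,\beta^k)$ (giving $C_1=2$), the same application of Lemma~\ref{lem:integeralapprox} and Lemma~\ref{lem:solutionest} at the iterate-dependent point with constants $M^k, C^k$, and the same observation that uniform-in-$k$ bounds on these constants are what makes the termination argument of Theorem~\ref{thm:convergence} carry over. No gaps.
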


\begin{proof}
Let the assumptions of Proposition~\ref{prop:convergenceWeaker} hold true. First observe that the sequence
$[u^k,v^k,z^k]$ obtained by Algorithm~\ref{algo:iter} with step {\footnotesize $3$:} replaced by
{\footnotesize $3'$:} is $\varepsilon^k$-feasible for the problem (\MIP) by the same arguments as in the proof
of Theorem~\ref{thm:convergence}. 

To show \eqref{eq:solest0prop} and \eqref{eq:Jest0prop}, let $y(\cdot;\omega^k,\alpha^k)$ denote the mild solution 
of \eqref{eq:relaxedconvexifiedproblemB}, \eqref{eq:relaxedconvexifiedproblemC} and $z(\cdot;u^k,\beta^k)$ be the mild solution 
of \eqref{eq:sysforward} with the respective controls. Then,
\begin{equation}
 \|y^k(t)-z^k(t)\|_X \leq \|y(t;\omega^k,\alpha^k) - z(t;u^k,\beta^k)\|_X + 2\varepsilon^k,~t \in [0,t_f]
\end{equation}
because $y^k$ and $z^k$ are both $\varepsilon^k$-accurate solutions of \eqref{eq:relaxedconvexifiedproblemB}, \eqref{eq:relaxedconvexifiedproblemC}
and \eqref{eq:sysforward}, respectively. From Lemma~\ref{lem:integeralapprox} with $\alpha=\alpha^k$, $\beta=\beta^k$, 
and $\Delta t = \Delta t^k$ with $\Delta t^k$ from \eqref{eq:deltatdef}, we get that
\begin{equation}
 \max_{i=1,\ldots,N} \left|\int_0^{t} \alpha^k_i(\tau)-\beta_i^k(\tau)\,d\tau\right| \leq (N-1) \Delta t^k,~t\in [0,t_f].
\end{equation}
Moreover, under hypothesis {\normalfont (H$_1$)} and the assumption that the hypothesis {\normalfont (H$_2$)} and {\normalfont (H$_3$)} hold 
with $\omega^*=\omega^k$ and $y^*=y^k$ with constants $C^k$ and $M^k$ for all $k=0,1,2,\ldots$, we may apply Lemma~\ref{lem:solutionest} with 
$y^*=y(\cdot;\omega^k,\alpha^k)$, $\alpha^*=\alpha^k$, $\beta=\beta^k$, $\omega^*=\omega^k=u^k$ and $\varepsilon=(N-1)\Delta t^k$ and obtain that
\begin{equation*}
\|y(t;\omega^k,\alpha^k) - z(t;u^k,\beta^k)\|_X \leq \left((M^k + C^k t)e^{\uM L t}\right)(N-1)\Delta t^k,~t\in [0,t_f].
\end{equation*}
This proves \eqref{eq:solest0prop} with $C_1 = 2$ and $C_2^k=\left((M^k + t_f C^k)e^{t_f \uM L}\right)$. Using again the continuity 
assumptions of $\phi$ and $\psi$ in {\normalfont (H$_1$)}, we obtain similarly as in the proof of Theorem~\ref{thm:convergence} that
\begin{equation}
 \|J(\omega^k,\alpha^k,y^k)-J(u^k,v^k,z^k)\|_X \leq C_3 \varepsilon^k + C_4^k(t)(N-1)\Delta t^k 
\end{equation}
$C_3 = C_1 (\eta + t_f \xi)$ and $C_4^k=(\eta + t_f \xi)C_2^k$. This proves \eqref{eq:Jest0prop}.

Finally, when the constants $C^k$ and $M^k$ can be chosen independently of $k$, $\varepsilon^k \to 0$ and $\Delta t^k \to 0$, then 
$C_4^k$ is independent of $k$ and we get by similar arguments as in the proof of Theorem~\ref{thm:convergence} that the iteration terminates 
after a finite number of steps such that \eqref{eq:MIOCPoptValueEstprop} holds. \qed
\end{proof}

In order to solve the optimal control problem~\eqref{eq:relaxedconvexifiedproblem} 
numerically, the problem may have to be (adaptively) discretized. In particular, direct or indirect 
numerical methods may be used. For an introduction to the basic concepts see, e.\,g., 
\cite{HinzePinnauUlbrichUlbrich2009}. Depending on the method of choice for the time 
discretization of the control functions $\omega \in U_{[0,t_f]}$ and $\alpha \in \tilde{V}_{[0,t_f]}$
it may in many cases be advantageous to discretize $\omega^k$ and $\alpha^k$ simultaneously using the 
grid $\Gcal^k$. This is for example implemented in the software package MS MINTOC 
designed for solving mixed-integer optimal control problems with ordinary differential 
equations \cite{SagerBockReinelt2009,Sager2009}. 

We conclude this section with an interesting remark saying that, in the fashion of Theorem~\ref{thm:convergence} 
and Proposition~\ref{prop:convergenceWeaker}, the relaxation method can also deal with state constraints.

\begin{remark}\label{rem:stateconstraints}
Suppose that we wish to include a constraint of the form
\begin{equation}\label{eq:stateconstraint}
 G(z(t),t)\geq 0,\quad t \in [0,t_f]
\end{equation}
in the mixed-integer optimal control problem (\MIP). Including this constraint also in 
\eqref{eq:relaxedconvexifiedproblem} with $z(t)$ replaced by $y(t)$ and assuming 
that there exists a function $\zeta \in L^{\infty}(0,t_f)$ such that
\begin{equation}\label{eq:GLip0}
|G(y_1,t)-G(y_2,t)| \leq \zeta(t)\|y_1 - y_2\|_X,\quad y_1,y_2 \in X
\end{equation} 
then \eqref{eq:solest0} yields that
\begin{equation}\label{eq:stateconstraintest}
 |G(y^*(t),t)-G(z^k(t),t)| \leq \zeta(t)C_1 \varepsilon^k + \zeta(t)C_2(N-1)\Delta t^k
\end{equation}
with $C_1$ and $C_2$ as in Theorem~\ref{thm:convergence}. This shows also a bilinear dependency 
of the integer-control approximation error for the state constraint violation on $\Delta t^k$ and $\varepsilon^k$. 
The conclusion of Proposition~\ref{prop:convergenceWeaker} can be adapted
accordingly.
\end{remark}

%%%%%%%%%%%%%%%%%%%%%%%%%%%%%%%%%%%%%%%%%%%%%%%%%%%%%%%%%%%%%%%%%%%%%%%%%%%%%%%%
\section{Combinatorial Constraints}\label{sec:cconstraints}

\begin{floatingalg}
\rule{\textwidth}{0.4pt}
\begin{algorithm}\label{algo:iter2}\mbox{} \normalfont
Consider Algorithm~\ref{algo:iter} where we replace steps {\footnotesize $4$:} {\footnotesize $5$:} and {\footnotesize $7$:} by
\begin{list}{\labelitemi}{\leftmargin=2em}
\item[{\footnotesize $4'$:}] If $\alpha^k \in \PC(0,t_f;\{0,1\}^N)$, \eqref{eq:cconstraints} holds with $v=\sum_{i=1}^N \alpha_i^k v^i$ and 
$\varepsilon^k \leq \varepsilon$, then set $\beta^k(t)=\alpha^k(t)$ and $z^k(t)=y^k(t)$ for $t \in [0,t_f]$ and STOP.
\item[{\footnotesize $5'$:}] 
Using $\Gcal^k$, define a piecewise constant function
            $\beta^k=(\beta^k_1,\ldots,\beta^k_N)\: [0,t_f] \to \{0,1\}^N$ by
            \begin{equation}\label{eq:omegadef2}\tag{A$_4$}
            \beta^k_i(t) = p^{k,*}_{i,j},\quad t\in [t^k_j,t^k_{j+1})
            \end{equation}
            where $p^{k,*}_{i,j}$ is given by the solution of the min-max
            problem
            \begin{equation}\label{eq:minmax}\tag{A$_5$}
            \left\{\begin{aligned}
            & \min_{p^k}~J_{\mathrm{sub}}(p^k)=\max_{i=1,\ldots,N}
            \max_{r=1,\ldots,n^k} \left| \sum_{l=1}^r (q_{i,l}^k-p_{i,l}^k)\Delta t^k_l\right|\\
            & \text{subject to}\\
            & \quad \sum_{r=1}^{n^k} |p^k_{i,r}-p^k_{j,{r+1}}| \leq
            K^{i,j},\quad i \in I,~j\in J\\
            & \quad \sum_{i=1}^N p_{i,r}^k = 1,\quad r=1,\ldots,n^k\\ 
            & \quad p_{i,r}^k \in \{0,1\},\quad i=1,\ldots,N,~r=1,\ldots,n^k
            \end{aligned}\right.
            \end{equation}
            with $\Delta t^k_l=t^k_{l+1}-t^k_l$, $l=1,\ldots,n^k$, and
            \begin{equation}\label{eq:qdef}\tag{A$_6$}
            q_{i,l}^k=\frac{1}{\Delta t_l^k} \int_{t_l}^{t_{l+1}}
            \alpha^k_i(t)\,dt,\quad i=1,\ldots,N,~l=1,\ldots,n^k.
            \end{equation}
   \item[{\footnotesize $7'$:}] If $|J_{\mathrm{sub}}(p^{k,*})-J_{\mathrm{sub}}(p^{k-1,*})|<\varepsilon$ or
   $k \geq k_{\max}$ then STOP.
\end{list}
\vspace*{0.5em}
\end{algorithm}
\rule{\textwidth}{0.4pt}
\end{floatingalg}

Suppose we wish to include combinatorial constraints of the form
\begin{equation}\label{eq:cconstraints}
\#_{v^i \cvto v^j}(v) \leq K^{i,j},\quad i\in I,~j\in J
\end{equation}
into the mixed-integer optimal control problem (\MIP) given by
\eqref{eq:cost}--\eqref{eq:constraintuc}, where $\#_{v^i \cvto v^j}(v)$ 
denotes the number of switches of the control function $v\: [0,t_f] \to \Vad$
from value $v^i$ to value $v^j$, $K^{i,j}$ are given, non-negative 
constants and $I,J \subset \{1,\ldots,N\}$.  

Note that the relaxation method considered in Section~\ref{sec:relax} typically
satisfies
\begin{equation*}
\#_{v^i \cvto v^j}(v) \to +\infty 
\end{equation*}
for some $i,j \in \{1,\ldots,N\}$ as we let $\varepsilon \to 0$, so eventually
violating \eqref{eq:cconstraints} for small $\varepsilon$.
Therefore, along the lines of \cite{SagerJungKirches2011}, we propose in Algorithm~\ref{algo:iter2} 
a modification of Algorithm~\ref{algo:iter}. The min-max problem \eqref{eq:minmax} can be written as a standard mixed-integer
linear problem (MILP) using slack variables and can be computed efficiently
\cite{SagerJungKirches2011}. We then have the following result.

\begin{theorem}\label{thm:cconstaints} Suppose that the hypotheses of Theorem~\ref{thm:convergence}
hold true and let $C_1$, $C_2$, $C_3$ and $C_4$ be as in Theorem~\ref{thm:convergence}. Then 
Algorithm~\ref{algo:iter2} terminates for every $\varepsilon>0$ and $k_{\max} \geq 0$ after a finite number of 
steps $\kappa \leq k_{\max}$ with an $\varepsilon^{\kappa}$-feasible solution 
$[u^*,v^*,z^*] \in \Xi_{\varepsilon^{\kappa}}$ of problem (\MIP) satisfying the combinatorial constraints \eqref{eq:cconstraints} 
and the estimates 
\begin{equation}\label{eq:solest2}
\|y^*(t)-z^*(t)\|_X \leq C_1\varepsilon^{\kappa} + C_2 (J_{\mathrm{sub}}(p^{\kappa,*})+\delta),\quad t \in [0,t_f],
\end{equation}
and
\begin{equation}\label{eq:Jest2}
\begin{aligned} 
 &|J(\omega^*,\alpha^*,y^*)-J(u^*,v^*,z^*)| \leq C_3 \varepsilon^{\kappa} + C_4 (J_{\mathrm{sub}}(p^{{\kappa},*})+\delta)
\end{aligned}
 \end{equation}
 for $J_{\mathrm{sub}}(p^{{\kappa},*})$ given by Algorithm~\ref{algo:iter2} and some $0 \leq \delta \leq \max_{l=1,\ldots,n^{\kappa}} \Delta t_l^{\kappa}$. 
\end{theorem}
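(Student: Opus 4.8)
The plan is to obtain the two estimates by re-running the proof of Theorem~\ref{thm:convergence} essentially verbatim, the only structural change being that the integral-approximation bound supplied there by Lemma~\ref{lem:integeralapprox} (namely $(N-1)\Delta t^k$) is now replaced by the optimal value $J_{\mathrm{sub}}(p^{\kappa,*})$ of the min-max problem \eqref{eq:minmax}, augmented by a within-interval correction $\delta$. The $\varepsilon^\kappa$-accuracy part of the bounds, carried by the terms $\delta_1,\delta_2,\delta_4$ of the decomposition \eqref{eq:estdeltasum}, is unaffected and still assembles into $C_1\varepsilon^\kappa$; only the rounding term $\delta_3$ is recomputed.

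First I would dispose of termination and feasibility. Termination after $\kappa \leq k_{\max}$ steps is immediate from the stopping rule $k \geq k_{\max}$ in step $7'$. The min-max problem \eqref{eq:minmax} is always feasible: selecting a single fixed mode on all subintervals gives a binary, SOS1-admissible $p^k$ with no switches, which satisfies $\sum_r |p_{i,r}^k - p_{j,r+1}^k| \leq K^{i,j}$ because $K^{i,j}\geq 0$; hence $\beta^\kappa$ and thus $[u^*,v^*,z^*]$ are well defined. That $[u^*,v^*,z^*]\in\Xi_{\varepsilon^\kappa}$ follows exactly as in Theorem~\ref{thm:convergence}: $u^*=\omega^\kappa\in\Uad$, the constraints in \eqref{eq:minmax} force $\beta_i^\kappa\in\{0,1\}$ with $\sum_i\beta_i^\kappa=1$ so that $v^*(t)\in\Vad$, and $z^*$ is an $\varepsilon^\kappa$-accurate solution of \eqref{eq:sysforward}. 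The combinatorial constraints \eqref{eq:cconstraints} hold because the switching constraints built into \eqref{eq:minmax} encode precisely \eqref{eq:cconstraints} for the piecewise constant $\beta^\kappa$ (cf. \cite{SagerJungKirches2011}).

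The one genuinely new point is the passage from the grid-point control afforded by $J_{\mathrm{sub}}$ to the uniform-in-$t$ integral bound \eqref{eq:intdiffest} required by Lemma~\ref{lem:solutionest}. By \eqref{eq:qdef} the weighted partial sums $\sum_{l=1}^r (q_{i,l}^\kappa - p_{i,l}^{\kappa,*})\Delta t_l^\kappa$ coincide with the integrated difference $\int_0^{\cdot}(\alpha_i^\kappa-\beta_i^\kappa)\,d\tau$ evaluated at the grid points of $\Gcal^\kappa$, so $|\int_0^{\cdot}(\alpha_i^\kappa-\beta_i^\kappa)\,d\tau|\leq J_{\mathrm{sub}}(p^{\kappa,*})$ holds at every grid point. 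For arbitrary $t$ lying in a subinterval, I would split the integral at the left grid endpoint and estimate the partial-interval contribution by $\int |\alpha_i^\kappa-\beta_i^\kappa|\,d\tau$, which is at most the interval length since both integrands lie in $[0,1]$. This produces a $\delta$ with $0\leq\delta\leq\max_{l=1,\ldots,n^\kappa}\Delta t_l^\kappa$ such that $\max_i\sup_t|\int_0^t(\alpha_i^\kappa-\beta_i^\kappa)\,d\tau|\leq J_{\mathrm{sub}}(p^{\kappa,*})+\delta$. I expect this grid-point-to-uniform passage, together with checking that the min-max constraints faithfully model \eqref{eq:cconstraints}, to be the only delicate step; everything else is a relabeling.

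With this bound in hand I would re-run the decomposition \eqref{eq:estdeltasum}--\eqref{eq:solest0proof} at index $k=\kappa$: using (H$_4$) and \eqref{eq:omegaconv} the terms $\delta_1,\delta_2,\delta_4$ combine into $C_1\varepsilon^\kappa$ as before, while in $\delta_3$ Lemma~\ref{lem:solutionest} is now applied with $\varepsilon=J_{\mathrm{sub}}(p^{\kappa,*})+\delta$ in place of $(N-1)\Delta t^\kappa$, contributing $C_2(J_{\mathrm{sub}}(p^{\kappa,*})+\delta)$ and yielding \eqref{eq:solest2}. The cost estimate \eqref{eq:Jest2} then follows from \eqref{eq:solest2}, hypothesis (H$_1$) and \eqref{eq:omegaconv} exactly as \eqref{eq:Jest0proof} is derived in Theorem~\ref{thm:convergence}. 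In the degenerate case where the algorithm already stops in step $4'$ with $\beta^\kappa=\alpha^\kappa$, the integrated difference vanishes identically and both estimates hold trivially, so this case needs no separate argument.
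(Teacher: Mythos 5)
Your proposal is correct and follows essentially the same route as the paper's own proof: termination via the stopping rule in step $7'$, feasibility and the combinatorial constraints from the structure of \eqref{eq:minmax}, the identification of the partial sums $\sum_{l=1}^r (q_{i,l}^\kappa-p_{i,l}^{\kappa,*})\Delta t_l^\kappa$ with the integrated difference at grid points plus a within-interval correction $\delta \leq \max_l \Delta t_l^\kappa$, and then re-running the decomposition \eqref{eq:estdeltasum} with Lemma~\ref{lem:solutionest} applied to $\delta_3$ with $\varepsilon = J_{\mathrm{sub}}(p^{\kappa,*})+\delta$, the terms $\delta_1,\delta_2,\delta_4$ giving $C_1\varepsilon^\kappa$ as before. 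The paper states the grid-to-uniform passage as an equality $\sup_t|\int_0^t(\alpha_i^\kappa-\beta_i^\kappa)\,d\tau| = J_{\mathrm{sub}}(p^{\kappa,*})+\delta$ rather than your inequality, but this difference is immaterial for the application of Lemma~\ref{lem:solutionest}.
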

\begin{proof}\mbox{}
Algorithm~\ref{algo:iter2} terminates by the criterion in step {\footnotesize $7'$:} after $\kappa$ steps, $\kappa \leq k_{\max}$, with a solution $[u^*,v^*,z^*]$.
We first show that this solution is $\varepsilon^\kappa$-feasible for the problem (\MIP). We have $u^*=\omega^\kappa \in \Uad$ and by definition of $\omega^\kappa$ in 
step {\footnotesize $3$:}, $v^\kappa(t) = \sum_{i=1}^N \beta_i^\kappa(t) v^i$, $t \in [0,t_f]$, by definition in step {\footnotesize $6$:},
so $v^\kappa(t) \in \Vad$, $t \in [0,t_f]$, because $\beta_i^k(t) \in \{0,1\}$ for all $i,k$ and $t \in [0,t_f]$ 
as seen from \eqref{eq:omegadef2} and the constraints in \eqref{eq:minmax}.By construction in step 
{\footnotesize $6$:}, $z^\kappa$ is an $\varepsilon^\kappa$-accurate solution of \eqref{eq:sysforward}. Thus, recalling 
\eqref{eq:Xidef}, $[u^*,v^*,z^*] \in \Xi_{\varepsilon^{\kappa}}$. The constraints in \eqref{eq:minmax} also ensure that the combinatorial 
constraints \eqref{eq:cconstraints} are satisfied. 

Next we show \eqref{eq:solest2}. For all $k=0,1,\ldots$ the cost function in \eqref{eq:cconstraints} is defined as
\begin{equation}
J_{\mathrm{sub}}(p^{k,*})=\max_{i=1,\ldots,N} \max_{r=1,\ldots,n^k} \left| \sum_{l=1}^r (q_{i,l}^k-p_{i,l}^k) \Delta t_l^k \right|.
\end{equation}
By definition of $q_{i,l}^k$ in \eqref{eq:qdef} and $\beta_i^k(t)$ in \eqref{eq:omegadef2}
and rearranging terms, we get
\begin{equation}
J_{\mathrm{sub}}(p^{k,*})=\max_{i=1,\ldots,N} \max_{r=1,\ldots,n^k} \left| 
\int_0^{t_{r+1}}\alpha_i^k(t)-\beta_i^k(t)\,dt \right|.
\end{equation}
Using that $\alpha_i^k(t) \in [0,1]$ and $\beta_i^k(t) \in \{0,1\}$ for all $t \in [0,t_f]$,
this yields
\begin{equation}
J_{\mathrm{sub}}(p^{k,*})=\max_{i=1,\ldots,N} \sup_{t \in [0,t_f]} \left| 
\int_0^{t}\alpha_i^k(\tau)-\beta_i^k(\tau)\,d\tau \right| - \delta
\end{equation}
for some $0 \leq \delta \leq \max_{l=1,\ldots,n^k} \Delta t_l^k$.
Fixing some $t \in [0,t_f]$, we have as in the proof of Theorem~\ref{thm:convergence}
\begin{equation}
 \|z^k(t)-y^*(t)\|_X \leq \delta_1(t)+\delta_2(t)+\delta_3(t)+\delta_4(t),
\end{equation}
with $\delta_i(t)$, $i=1,\ldots,4$, as in \eqref{eq:deltatdef}. Moreover, as in the proof
of Theorem~\ref{thm:convergence}, we see that
$\delta_1(t) \leq \varepsilon^k$, $\delta_2(t) \leq C_J(1+\eta+\xi)e^{\bar{M}Lt}\varepsilon^k$ and 
$\delta_4(t)\leq M C_J(1+\eta+\xi) e^{\bar{M}Lt}\varepsilon^k$. Using {\normalfont (H$_1$)}--{\normalfont (H$_3$)},
we can apply Lemma~\ref{lem:solutionest} with $y^*=y(\cdot;\omega^*,\alpha^k)$, $\alpha^*=\alpha^k$, $\beta=\beta^k$ and $\varepsilon=J_{\mathrm{sub}}(p^{k,*})+\delta$ 
and get that
\begin{equation}
\delta_3(t)=\|y(t;\omega^*,\beta^k)-y(t;\omega^*,\alpha^k)\|_X \leq (M + C t)e^{\uM L t} (J_{\mathrm{sub}}(p^{k,*})+\delta).
\end{equation} 
Summing up the estimates for $\delta_1(t),\ldots,\delta_4(t)$ and rearranging terms, this
proves \eqref{eq:solest2} with the definition of $C_1$ and $C_2$ as in Theorem~\ref{thm:convergence}. 
The estimate \eqref{eq:Jest2} then follows from \eqref{eq:solest2} and the definition of the constants
$C_3$ and $C_4$ as in Theorem~\ref{thm:convergence} using the definition of 
the cost function $J$ in \eqref{eq:cost} and the Lipschitz constants $\eta$ and $\xi$ from {\normalfont (H$_1$)}. 
This completes the proof of Theorem~\ref{thm:cconstaints}. \qed
\end{proof}

\begin{remark} As already remarked in the case without combinatorial constraints, the method can 
also deal with state constraints such as \eqref{eq:stateconstraint}. Assuming again existence of 
a function $\zeta \in L^{\infty}(0,t_f)$ such that \eqref{eq:GLip0} holds true, \eqref{eq:solest2} 
yields a bounded deviation of the feasible reference trajectory
\begin{equation}\label{eq:stateconstraintest2}
 |G(y^*(t),t)-G(z^k(t),t)| \leq \zeta(t)C_1\varepsilon^k + \zeta(t)C_2(J_{\mathrm{sub}}(p^{k,*})+\delta),
\end{equation}
and hence a bound on the worst case constraint violation. Also, the conclusion of 
Proposition~\ref{prop:convergenceWeaker} can be adapted analogously.
\end{remark}
%%%%%%%%%%%%%%%%%%%%%%%%%%%%%%%%%%%%%%%%%%%%%%%%%%%%%%%%%%%%%%%%%%%%%%%%%%%%%%%%

%%%%%%%%%%%%%%%%%%%%%%%%%%%%%%%%%%%%%%%%%%%%%%%%%%%%%%%%%%%%%%%%%%%%%%%%%%%%%%%%
\section{Examples}\label{sec:examples}
In this section we discuss the hypothesis (H$_1$)--(H$_3$) of Theorem~\ref{thm:convergence}
exemplary for a linear and a semilinear control problem where $A$ is the generator of an 
analytic semigroup in view of Proposition~\ref{prop:convergenceWeaker} and present numerical 
results for a test problem in each case using the conclusions.

\subsection{A linear parabolic equation with lumped controls}
Let $\Omega$ be a domain in $\RR^n$ and $f_i\: \Omega \to \RR$,
$i=1,\ldots,N$, be fixed control profiles. Consider the internally 
controlled heat equation
\begin{equation}\label{eq:heatlumped}
\left\{\begin{aligned}
&\frac{\partial z}{\partial t}(x,t) - \rho \sum_{j=1}^n \frac{\partial^2
z}{\partial x^2_j}(x,t) = f_{\sigma(t)}(x)u(t),\quad \text{in}~Q\\ 
&z(x,t) = 0,\quad \text{on}~\Sigma\\
&z(x,0)=z_0(x),\quad \text{in}~\Omega
\end{aligned}\right.
\end{equation}
where $Q=\Omega \times (0,t_f)$, $\Sigma=\partial\Omega \times (0,t_f)$ and
$\rho$ is a positive constant.

Suppose that for some $\lambda_1 \geq 0$ and $\lambda_2>0$ the control task is to minimize 
the cost function
\begin{equation}\label{eq:heatcost}
J = \int_\Omega |z(t_f,x)|^2\,dx + \lambda_1 \int_0^{t_f} \int_\Omega
|z(t,x)|^2\,dx\,dt + \lambda_2 \int_0^{t_f} |u(t)|^2\,dt 
\end{equation} 
where $z$ is the weak solution of \eqref{eq:heatlumped} by selecting $u\:[0,t_f] \to \RR$ 
and a switching signal $\sigma(\cdot)\: [0,t_f] \to \{1,\ldots,N\}$
determining the control profile $f_i$ applied at time $t \in [0,t_f]$. 

In order to write the above problem in abstract form \eqref{eq:sysuc}, 
we let $X=L^2(\Omega)$, set $V=U=\Uad=\RR$, $\Vad=\{1,\ldots,N\}$ and
define $f\: [0,t_f] \times U \times V \to X$ by 
$f(t,u,v)(x):=f_v(x)u$, $\phi(z)=\|z\|^2_X$, 
$\psi(z,u)=\lambda_1\|z\|^2_X + \lambda_2|u|^2$ and define $(A,D(A))$ as
\begin{equation}
\begin{aligned}
&D(A) = H^2(\Omega) \cap H^{1}_0(\Omega)\\
&(Az)(x) = \sum_{j=1}^n \frac{\partial^2 z}{\partial
x^2_j} (x),~z \in D(A).
\end{aligned}
\end{equation}
It is well-known that $(A,D(A))$ is the generator of a strongly continuous
(analytic) semigroup of contractions $\{T(t)\}_{t \geq 0}$ on $X$, see, 
e.\,g., \cite{Pazy1983}. We choose $X_{[0,t_f]}=C([0,t_f];X)$, 
$U_{[0,t_f]}=PC(0,t_f;\RR)$ and $V_{[0,t_f]}=L^\infty(0,t_f;\RR)$.

Let $[\omega^k,\alpha^k,y^k]$ be a sequence of feasible solutions of the corresponding
relaxed problem \eqref{eq:relaxedconvexifiedproblem} so that $[\omega^k,\alpha^k,y^k] \in \Scal$ 
for $k=0,1,2,\ldots$, where $\Scal$ is a bounded subset of $X_{[0,t_f]} \times U_{[0,t_f]} \times \tilde{V}_{[0,t_f]}$ 
and assume that the fixed control profiles $f_i$ satisfy
\begin{equation}\label{eq:ShmoothProfiles}
f_i \in D(A)~\text{for all}~i=1,\ldots,N.
\end{equation}

We now want to check if the assumptions of Proposition~\ref{prop:convergenceWeaker} are satisfied. 
For this, we can restict our analysis without loss of generality to $\Scal$ and, using that the functions 
$\phi$, $\psi$ and $f$ are locally Lipschitz continuous and $\Scal$ is bounded, we can see
that hypothesis (H$_1$) holds. Moreover, due to \eqref{eq:ShmoothProfiles} we have that
$f(t,\omega^k(t),v^i)=f_v(x)\omega^k(t) \in D(A)$ and 
\begin{equation}
\left\|Af(t,\omega^k(t),v^i)\right\|_X \leq \|A f_i\|_X |\omega^k(t)| \leq \bar{C^i}
\end{equation}
for some constant $\bar{C}^i$ and due to the choice of $U_{[0,t_f]}$ we have that
$f(s,\omega^k(s),v^i)$ is differentiable for a.\,e. $s \in [0,t_f]$ and
\begin{equation}
 \left\|\frac{d}{ds} f(s,\omega^k(s),v^i) \right\|_X = \left\|\frac{d}{ds}
 f_i \omega^k(s) \right\|_X \leq \|f_i\|_X \left|\frac{d}{ds} \omega^k(s)\right| \leq \bar{L}^i
\end{equation}
for some constant $\bar{L}^i$. Noting that \eqref{eq:heatlumped} and thus also the abstract system
is linear, we may apply Proposition~\ref{prop:OnH2Auniformlybounded} to see that hypothesis (H$_2$) holds 
for every $k=0,1,2,\ldots$. Also, the bound in {\normalfont (H$_3$)} holds for all $k$, observing that
\begin{equation}
  \sup_{t\in(0,t_f)} \|f(t,\omega^k(t),v^i)\|_X \leq 
   \|f_i\|_X \|\omega^k(t)\|_\infty \leq M^i,
\end{equation}
for some constant $M^i$. Hence, for any such choices $[\omega^k,\alpha^k,y^k]$, any sequence 
$\varepsilon^k \to 0$, $\Delta t^k \to 0$ and any $\varepsilon>0$, the relaxation method terminates after finitely many
steps $\kappa$ with an $\varepsilon$-feasible solution $[u^*,v^*,z^*]$ of problem \eqref{eq:heatlumped}
satisfying the estimate
\begin{equation}
|J(\omega^\kappa,\alpha^\kappa,y^\kappa) - J(u^*,v^*,z^*)| \leq \varepsilon, 
\end{equation}
by Proposition~\ref{prop:convergenceWeaker}. The desired switching structure $\sigma\: [0,t_f] \to \{1,\ldots,N\}$ 
is finally given by $\sigma(t)=v^*(t)$.

\begin{example}\label{ex:heatrelax2d} To demonstrate the applicability of the
approach, we implemented the relaxation method for a test problem of the form 
\eqref{eq:heatlumped}--\eqref{eq:heatcost} with a two-dimensional rectangular domain 
$\Omega$ and the following parameters.

Let $\Omega=[0,L_\xi] \times [0,L_\zeta]$, $\rho=0.01$, $L_\xi=1$, $L_\zeta=2$ and $t_f=15$ and 
suppose that there are given 9 actuator locations $x_i$ with the positions given by $(\xi_j,\zeta_k)
\in \Omega$, where 
\begin{equation} 
\xi_j = \frac{j+0.005 L_\xi}{4},~\zeta_k = \frac{k+0.005 L_\zeta}{4},\quad
j,k=1,2,3.
\end{equation}
Further suppose that there is a point actuator for each of these locations $x_i$
which we model here by setting $f_i=B_i$ with
\begin{equation}\label{eq:pact}
B_i(x) = \frac{1}{\sqrt{2 \pi \epsilon}}e^{\frac{-(x_i-x)^2}{2\epsilon}}
\end{equation} 
for some small, but fixed $\epsilon>0$. Note that $\int_{\Omega} B_i(x)\,dx = 1$
and that $B_i(x)$ converges to the Dirac delta function $\delta(x-x_i)$ as
$\epsilon \to 0$. 

As initial data we take 
\begin{equation}
z_0(\xi,\zeta)=10\sin(\pi \xi)10\sin(\pi \zeta) 
\end{equation}
and as parameters in the cost function we take $\lambda_1=2$ and
$\lambda_2=\frac{1}{500}$.

We have chosen these numerical values to match as closely as possible the 
two-dimensional example in \cite{IftimeDemetriou2009} motivated by thermal manufacturing. 
The only difference is that the pointwise actuators $\delta(x-x_i)$ were approximated
in \cite{IftimeDemetriou2009} by indicator functions of an epsilon environment while we choose 
here a smoother approximation in view of \eqref{eq:ShmoothProfiles}. Regarding a direct 
treatment of $\delta(x-x_i)$ as an unbounded control operator instead of using
the bounded approximation \eqref{eq:pact}, see the comments in Section~\ref{sec:final}.

The solution of the relaxed optimal control problem \eqref{eq:relaxedconvexifiedproblem}
has been computed numerically. We discretized the state equation \eqref{eq:heatlumped} in 
space using a standard Galerkin approach with triangular elements and linear Ansatz-functions.
We eliminated one control by setting $\tilde{\alpha}_i(t)=\alpha_i(t)$, $i=1,\ldots,N-1$,
where we then get $\alpha_N(t)=1-\sum_{i=1}^{N-1}\tilde{\alpha}_i(t)$ using the constraint
$\sum_{i=1}^N \alpha_i(t) = 1$, $t \in [0,t_f]$.
This constraint is then always fulfilled and the condition
 $\alpha_N(t)\in[0,1]$, $t \geq t_0$,
is equivalent to imposing that $\tilde{\alpha}_i \in [0,1]$ and 
$\sum_{i=1}^{N-1} \tilde{\alpha}_i - 1 \leq 0$.
The resulting semi-discretized control problem was solved with Bock's direct multiple shooting 
method \cite{Bock1984,Leineweber2003b} implemented in the software-package MUSCOD-II. The control functions $\omega$ and 
$\tilde{\alpha}$ were chosen as piecewise constant and initialized with $\omega(t)=0$ 
and $\tilde{\alpha}_i(t)=\frac19$, $t\in [0,t_f]$, $i=1,\ldots,8$. 

The computations were made for an unstructured grid with 162 triangular elements and 8, 16 and 32 equidistant shooting 
intervals. Time integration was carried out by a BDF-method and sensitivities were computed using internal numerical differentiation.
Error estimates for these methods provide an accuracy of some $\varepsilon_1 \geq 0$ for the so obtained approximations of
the mild solutions.

We implemented Algorithm~\ref{algo:iter} where we compute in step {\footnotesize $3$:} solutions satisfying first order 
necessary conditions with an accuracy of $\varepsilon_2$. The above discussion of the abstract example thus applies. In particular, using
that $\lambda_2>0$, we obtain the existence of a bounded set $\Scal$ containing the iterates $[\omega^k,\alpha^k,y^k]$. 
We adaptively solved the relaxed problem on a common control discretization grid $\Gcal^k$ for $u$ and $\tilde{\alpha}$.
For the computations, we have chosen $\varepsilon_1=\text{1.0E-04}$ and used bisection 
for refinements of the control grids $\Gcal^k$ in step {\footnotesize $7$:}. Thus, $\varepsilon^k$ is given implicitly as a function of
$\varepsilon_1$, $\varepsilon_2$ and $\Delta t^k$. This construction ensures that $\varepsilon^k \to 0$ as $k \to \infty$.

\begin{table}[t]
\renewcommand*\arraystretch{1.7}
\footnotesize
 \begin{longtable*}{|c|c|c|c|c|c|} 
\hline
 \footnotesize $k$ 
&
\footnotesize
$\Delta t^k_{\max}$
&
\footnotesize $J^k_\rel=J(\omega^k,\tilde{\alpha}^k,y^k)$
&
\footnotesize
$J^k=J(u^k,v^k,z^k)$
&
\footnotesize 
Error $({J^{2}_{\rel}})^{-1} |J^{2}_{\rel}-J^k|$ 
\\ 
\hline 

  0 & 1.8750 & 5.634024E+04 & 1.283813E+05  & 2.9809  \\
  1 & 0.9375 & 4.190360E+04 & 7.080185E+04  & 1.1955  \\
  2 & 0.4688 & 3.224914E+04 & 6.175488E+04  & 0.9149  \\
\hline
 \end{longtable*}
\normalsize \mbox{}
\caption{Performance of the relaxation method for
Example~\ref{ex:heatrelax2d}.}
\label{table:heatrelax2d}
\end{table}

The performance of the relaxation method is summarized in Table~\ref{table:heatrelax2d}. We see
that the relative error of the mixed-integer solution compared with the best found relaxed solution
decreases with the grid refinements in accordance with Proposition~\ref{prop:convergenceWeaker}. 
The best found controls and the evolution of the state norm of the corresponding solution
are displayed in Figure~\ref{fig:heatrelax2d}. We see the rounding error in form of an 
overshooting behavior when comparing the evolution of the $L^2(\Omega)$-norm of the relaxed
and the mixed-integer solution. This effect decreases with the size of the time discretization step 
size. The cost corresponding to the best found solution is 6175. Unfortunately, 
\cite{IftimeDemetriou2009} does not report the cost of the best found solution, but a cumulative 
$L^2(0,15;L^2(\Omega))$-norm of $90.27$. The cumulative $L^2(0,15;L^2(\Omega))$-norm of our
best found solution is $78.58$.

\begin{figure}[t]
\vspace*{-1em}
\scalebox{0.73}{
% GNUPLOT: LaTeX picture with Postscript
\renewcommand{\gplbacktext}{}
\renewcommand{\gplfronttext}{}
\begingroup
  \ifGPblacktext
    % no textcolor at all
    \def\colorrgb#1{}%
    \def\colorgray#1{}%
  \else
    % gray or color?
    \ifGPcolor
      \def\colorrgb#1{\color[rgb]{#1}}%
      \def\colorgray#1{\color[gray]{#1}}%
      \expandafter\def\csname LTw\endcsname{\color{white}}%
      \expandafter\def\csname LTb\endcsname{\color{black}}%
      \expandafter\def\csname LTa\endcsname{\color{black}}%
      \expandafter\def\csname LT0\endcsname{\color[rgb]{1,0,0}}%
      \expandafter\def\csname LT1\endcsname{\color[rgb]{0,1,0}}%
      \expandafter\def\csname LT2\endcsname{\color[rgb]{0,0,1}}%
      \expandafter\def\csname LT3\endcsname{\color[rgb]{1,0,1}}%
      \expandafter\def\csname LT4\endcsname{\color[rgb]{0,1,1}}%
      \expandafter\def\csname LT5\endcsname{\color[rgb]{1,1,0}}%
      \expandafter\def\csname LT6\endcsname{\color[rgb]{0,0,0}}%
      \expandafter\def\csname LT7\endcsname{\color[rgb]{1,0.3,0}}%
      \expandafter\def\csname LT8\endcsname{\color[rgb]{0.5,0.5,0.5}}%
    \else
      % gray
      \def\colorrgb#1{\color{black}}%
      \def\colorgray#1{\color[gray]{#1}}%
      \expandafter\def\csname LTw\endcsname{\color{white}}%
      \expandafter\def\csname LTb\endcsname{\color{black}}%
      \expandafter\def\csname LTa\endcsname{\color{black}}%
      \expandafter\def\csname LT0\endcsname{\color{black}}%
      \expandafter\def\csname LT1\endcsname{\color{black}}%
      \expandafter\def\csname LT2\endcsname{\color{black}}%
      \expandafter\def\csname LT3\endcsname{\color{black}}%
      \expandafter\def\csname LT4\endcsname{\color{black}}%
      \expandafter\def\csname LT5\endcsname{\color{black}}%
      \expandafter\def\csname LT6\endcsname{\color{black}}%
      \expandafter\def\csname LT7\endcsname{\color{black}}%
      \expandafter\def\csname LT8\endcsname{\color{black}}%
    \fi
  \fi
  \setlength{\unitlength}{0.0500bp}%
  \begin{picture}(4384.00,4533.33)%
    \gplgaddtomacro\gplbacktext{%
      \csname LTb\endcsname%
      \put(264,618){\makebox(0,0)[r]{\strut{} 0}}%
      \put(264,1194){\makebox(0,0)[r]{\strut{} 1}}%
      \put(396,340){\makebox(0,0){\strut{}0}}%
      \put(1154,340){\makebox(0,0){\strut{}3}}%
      \put(1912,340){\makebox(0,0){\strut{}6}}%
      \put(2670,340){\makebox(0,0){\strut{}9}}%
      \put(3428,340){\makebox(0,0){\strut{}12}}%
      \put(4186,340){\makebox(0,0){\strut{}15}}%
      \put(2291,120){\makebox(0,0){\strut{}Time $t$}}%
    }%
    \gplgaddtomacro\gplfronttext{%
    }%
    \gplgaddtomacro\gplbacktext{%
      \csname LTb\endcsname%
      \put(264,1468){\makebox(0,0)[r]{\strut{} 0}}%
      \put(264,2044){\makebox(0,0)[r]{\strut{} 1}}%
      \put(396,1190){\makebox(0,0){\strut{} }}%
      \put(1154,1190){\makebox(0,0){\strut{} }}%
      \put(1912,1190){\makebox(0,0){\strut{} }}%
      \put(2670,1190){\makebox(0,0){\strut{} }}%
      \put(3428,1190){\makebox(0,0){\strut{} }}%
      \put(4186,1190){\makebox(0,0){\strut{} }}%
    }%
    \gplgaddtomacro\gplfronttext{%
    }%
    \gplgaddtomacro\gplbacktext{%
      \csname LTb\endcsname%
      \put(264,2318){\makebox(0,0)[r]{\strut{} 0}}%
      \put(264,2894){\makebox(0,0)[r]{\strut{} 1}}%
      \put(396,2040){\makebox(0,0){\strut{} }}%
      \put(1154,2040){\makebox(0,0){\strut{} }}%
      \put(1912,2040){\makebox(0,0){\strut{} }}%
      \put(2670,2040){\makebox(0,0){\strut{} }}%
      \put(3428,2040){\makebox(0,0){\strut{} }}%
      \put(4186,2040){\makebox(0,0){\strut{} }}%
    }%
    \gplgaddtomacro\gplfronttext{%
    }%
    \gplgaddtomacro\gplbacktext{%
      \csname LTb\endcsname%
      \put(264,3168){\makebox(0,0)[r]{\strut{} 0}}%
      \put(264,3744){\makebox(0,0)[r]{\strut{} 1}}%
      \put(396,2890){\makebox(0,0){\strut{} }}%
      \put(1154,2890){\makebox(0,0){\strut{} }}%
      \put(1912,2890){\makebox(0,0){\strut{} }}%
      \put(2670,2890){\makebox(0,0){\strut{} }}%
      \put(3428,2890){\makebox(0,0){\strut{} }}%
      \put(4186,2890){\makebox(0,0){\strut{} }}%
      \put(2291,4022){\makebox(0,0){\strut{}$\tilde{\alpha}_i^*(\cdot)$, $v_i^*(\cdot)$, $i=1,\ldots,4$}}%
    }%
    \gplgaddtomacro\gplfronttext{%
    }%
    \gplbacktext{}
    \put(0,0){\includegraphics{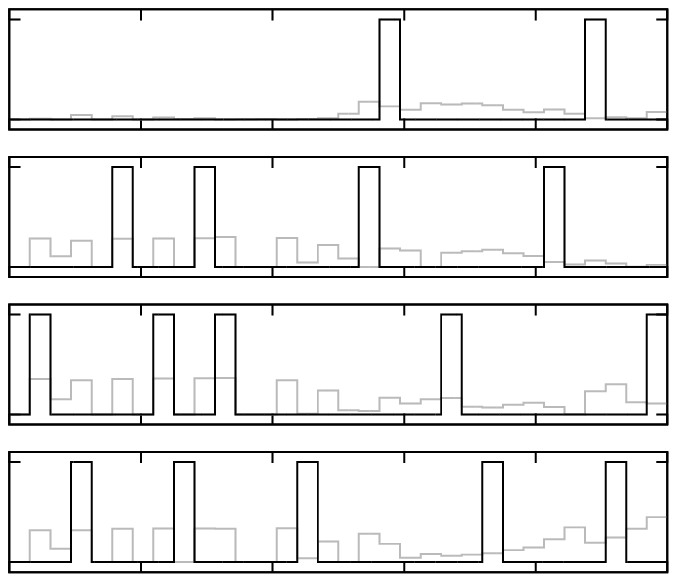}}%
    \gplfronttext{}
  \end{picture}%
\endgroup}~\scalebox{0.73}{
% GNUPLOT: LaTeX picture with Postscript
\renewcommand{\gplbacktext}{}
\renewcommand{\gplfronttext}{}
\begingroup
  \ifGPblacktext
    % no textcolor at all
    \def\colorrgb#1{}%
    \def\colorgray#1{}%
  \else
    % gray or color?
    \ifGPcolor
      \def\colorrgb#1{\color[rgb]{#1}}%
      \def\colorgray#1{\color[gray]{#1}}%
      \expandafter\def\csname LTw\endcsname{\color{white}}%
      \expandafter\def\csname LTb\endcsname{\color{black}}%
      \expandafter\def\csname LTa\endcsname{\color{black}}%
      \expandafter\def\csname LT0\endcsname{\color[rgb]{1,0,0}}%
      \expandafter\def\csname LT1\endcsname{\color[rgb]{0,1,0}}%
      \expandafter\def\csname LT2\endcsname{\color[rgb]{0,0,1}}%
      \expandafter\def\csname LT3\endcsname{\color[rgb]{1,0,1}}%
      \expandafter\def\csname LT4\endcsname{\color[rgb]{0,1,1}}%
      \expandafter\def\csname LT5\endcsname{\color[rgb]{1,1,0}}%
      \expandafter\def\csname LT6\endcsname{\color[rgb]{0,0,0}}%
      \expandafter\def\csname LT7\endcsname{\color[rgb]{1,0.3,0}}%
      \expandafter\def\csname LT8\endcsname{\color[rgb]{0.5,0.5,0.5}}%
    \else
      % gray
      \def\colorrgb#1{\color{black}}%
      \def\colorgray#1{\color[gray]{#1}}%
      \expandafter\def\csname LTw\endcsname{\color{white}}%
      \expandafter\def\csname LTb\endcsname{\color{black}}%
      \expandafter\def\csname LTa\endcsname{\color{black}}%
      \expandafter\def\csname LT0\endcsname{\color{black}}%
      \expandafter\def\csname LT1\endcsname{\color{black}}%
      \expandafter\def\csname LT2\endcsname{\color{black}}%
      \expandafter\def\csname LT3\endcsname{\color{black}}%
      \expandafter\def\csname LT4\endcsname{\color{black}}%
      \expandafter\def\csname LT5\endcsname{\color{black}}%
      \expandafter\def\csname LT6\endcsname{\color{black}}%
      \expandafter\def\csname LT7\endcsname{\color{black}}%
      \expandafter\def\csname LT8\endcsname{\color{black}}%
    \fi
  \fi
  \setlength{\unitlength}{0.0500bp}%
  \begin{picture}(4384.00,4533.33)%
    \gplgaddtomacro\gplbacktext{%
      \csname LTb\endcsname%
      \put(264,618){\makebox(0,0)[r]{\strut{} 0}}%
      \put(264,1194){\makebox(0,0)[r]{\strut{} 1}}%
      \put(396,340){\makebox(0,0){\strut{}0}}%
      \put(1154,340){\makebox(0,0){\strut{}3}}%
      \put(1912,340){\makebox(0,0){\strut{}6}}%
      \put(2670,340){\makebox(0,0){\strut{}9}}%
      \put(3428,340){\makebox(0,0){\strut{}12}}%
      \put(4186,340){\makebox(0,0){\strut{}15}}%
      \put(2291,120){\makebox(0,0){\strut{}Time $t$}}%
    }%
    \gplgaddtomacro\gplfronttext{%
    }%
    \gplgaddtomacro\gplbacktext{%
      \csname LTb\endcsname%
      \put(264,1468){\makebox(0,0)[r]{\strut{} 0}}%
      \put(264,2044){\makebox(0,0)[r]{\strut{} 1}}%
      \put(396,1190){\makebox(0,0){\strut{} }}%
      \put(1154,1190){\makebox(0,0){\strut{} }}%
      \put(1912,1190){\makebox(0,0){\strut{} }}%
      \put(2670,1190){\makebox(0,0){\strut{} }}%
      \put(3428,1190){\makebox(0,0){\strut{} }}%
      \put(4186,1190){\makebox(0,0){\strut{} }}%
    }%
    \gplgaddtomacro\gplfronttext{%
    }%
    \gplgaddtomacro\gplbacktext{%
      \csname LTb\endcsname%
      \put(264,2318){\makebox(0,0)[r]{\strut{} 0}}%
      \put(264,2894){\makebox(0,0)[r]{\strut{} 1}}%
      \put(396,2040){\makebox(0,0){\strut{} }}%
      \put(1154,2040){\makebox(0,0){\strut{} }}%
      \put(1912,2040){\makebox(0,0){\strut{} }}%
      \put(2670,2040){\makebox(0,0){\strut{} }}%
      \put(3428,2040){\makebox(0,0){\strut{} }}%
      \put(4186,2040){\makebox(0,0){\strut{} }}%
    }%
    \gplgaddtomacro\gplfronttext{%
    }%
    \gplgaddtomacro\gplbacktext{%
      \csname LTb\endcsname%
      \put(264,3168){\makebox(0,0)[r]{\strut{} 0}}%
      \put(264,3744){\makebox(0,0)[r]{\strut{} 1}}%
      \put(396,2890){\makebox(0,0){\strut{} }}%
      \put(1154,2890){\makebox(0,0){\strut{} }}%
      \put(1912,2890){\makebox(0,0){\strut{} }}%
      \put(2670,2890){\makebox(0,0){\strut{} }}%
      \put(3428,2890){\makebox(0,0){\strut{} }}%
      \put(4186,2890){\makebox(0,0){\strut{} }}%
      \put(2291,4019){\makebox(0,0){\strut{}$\tilde{\alpha}_i^*(\cdot)$, $v_i^*(\cdot)$, $i=5,\ldots,8$}}%
    }%
    \gplgaddtomacro\gplfronttext{%
    }%
    \gplbacktext
    \put(0,0){\includegraphics{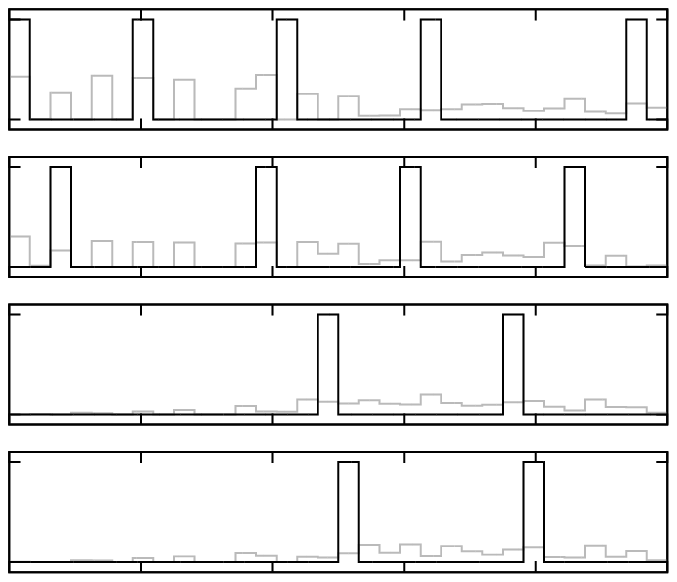}}%
    \gplfronttext
  \end{picture}%
\endgroup
}\\
\scalebox{0.73}{
% GNUPLOT: LaTeX picture with Postscript
\renewcommand{\gplbacktext}{}
\renewcommand{\gplfronttext}{}
\begingroup
  \ifGPblacktext
    % no textcolor at all
    \def\colorrgb#1{}%
    \def\colorgray#1{}%
  \else
    % gray or color?
    \ifGPcolor
      \def\colorrgb#1{\color[rgb]{#1}}%
      \def\colorgray#1{\color[gray]{#1}}%
      \expandafter\def\csname LTw\endcsname{\color{white}}%
      \expandafter\def\csname LTb\endcsname{\color{black}}%
      \expandafter\def\csname LTa\endcsname{\color{black}}%
      \expandafter\def\csname LT0\endcsname{\color[rgb]{1,0,0}}%
      \expandafter\def\csname LT1\endcsname{\color[rgb]{0,1,0}}%
      \expandafter\def\csname LT2\endcsname{\color[rgb]{0,0,1}}%
      \expandafter\def\csname LT3\endcsname{\color[rgb]{1,0,1}}%
      \expandafter\def\csname LT4\endcsname{\color[rgb]{0,1,1}}%
      \expandafter\def\csname LT5\endcsname{\color[rgb]{1,1,0}}%
      \expandafter\def\csname LT6\endcsname{\color[rgb]{0,0,0}}%
      \expandafter\def\csname LT7\endcsname{\color[rgb]{1,0.3,0}}%
      \expandafter\def\csname LT8\endcsname{\color[rgb]{0.5,0.5,0.5}}%
    \else
      % gray
      \def\colorrgb#1{\color{black}}%
      \def\colorgray#1{\color[gray]{#1}}%
      \expandafter\def\csname LTw\endcsname{\color{white}}%
      \expandafter\def\csname LTb\endcsname{\color{black}}%
      \expandafter\def\csname LTa\endcsname{\color{black}}%
      \expandafter\def\csname LT0\endcsname{\color{black}}%
      \expandafter\def\csname LT1\endcsname{\color{black}}%
      \expandafter\def\csname LT2\endcsname{\color{black}}%
      \expandafter\def\csname LT3\endcsname{\color{black}}%
      \expandafter\def\csname LT4\endcsname{\color{black}}%
      \expandafter\def\csname LT5\endcsname{\color{black}}%
      \expandafter\def\csname LT6\endcsname{\color{black}}%
      \expandafter\def\csname LT7\endcsname{\color{black}}%
      \expandafter\def\csname LT8\endcsname{\color{black}}%
    \fi
  \fi
  \setlength{\unitlength}{0.0500bp}%
  \begin{picture}(4384.00,4533.33)%
    \gplgaddtomacro\gplbacktext{%
      \csname LTb\endcsname%
      \put(264,904){\makebox(0,0)[r]{\strut{}-40}}%
      \put(264,1613){\makebox(0,0)[r]{\strut{}-20}}%
      \put(264,2321){\makebox(0,0)[r]{\strut{} 0}}%
      \put(264,3029){\makebox(0,0)[r]{\strut{} 20}}%
      \put(264,3738){\makebox(0,0)[r]{\strut{} 40}}%
      \put(396,330){\makebox(0,0){\strut{} 0}}%
      \put(1154,330){\makebox(0,0){\strut{} 3}}%
      \put(1912,330){\makebox(0,0){\strut{} 6}}%
      \put(2670,330){\makebox(0,0){\strut{} 9}}%
      \put(3428,330){\makebox(0,0){\strut{} 12}}%
      \put(4186,330){\makebox(0,0){\strut{} 15}}%
      \put(2291,110){\makebox(0,0){\strut{}Time t}}%
      \put(2291,4312){\makebox(0,0){\strut{}Ordinary control $u^*(\cdot)$}}%
    }%
    \gplgaddtomacro\gplfronttext{%
    }%
    \gplbacktext{}
    \put(0,0){\includegraphics{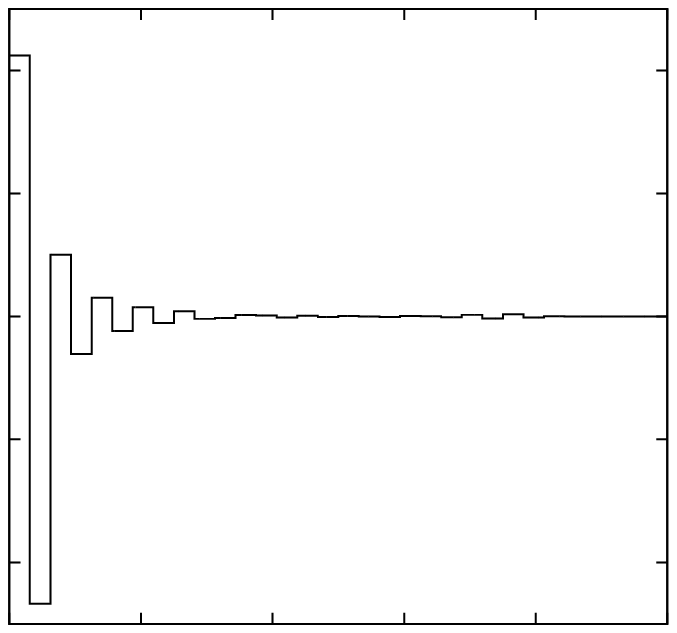}}%
    \gplfronttext{}
  \end{picture}%
\endgroup
}~\scalebox{0.73}{
% GNUPLOT: LaTeX picture with Postscript
\renewcommand{\gplbacktext}{}
\renewcommand{\gplfronttext}{}
\begingroup
  \ifGPblacktext
    % no textcolor at all
    \def\colorrgb#1{}%
    \def\colorgray#1{}%
  \else
    % gray or color?
    \ifGPcolor
      \def\colorrgb#1{\color[rgb]{#1}}%
      \def\colorgray#1{\color[gray]{#1}}%
      \expandafter\def\csname LTw\endcsname{\color{white}}%
      \expandafter\def\csname LTb\endcsname{\color{black}}%
      \expandafter\def\csname LTa\endcsname{\color{black}}%
      \expandafter\def\csname LT0\endcsname{\color[rgb]{1,0,0}}%
      \expandafter\def\csname LT1\endcsname{\color[rgb]{0,1,0}}%
      \expandafter\def\csname LT2\endcsname{\color[rgb]{0,0,1}}%
      \expandafter\def\csname LT3\endcsname{\color[rgb]{1,0,1}}%
      \expandafter\def\csname LT4\endcsname{\color[rgb]{0,1,1}}%
      \expandafter\def\csname LT5\endcsname{\color[rgb]{1,1,0}}%
      \expandafter\def\csname LT6\endcsname{\color[rgb]{0,0,0}}%
      \expandafter\def\csname LT7\endcsname{\color[rgb]{1,0.3,0}}%
      \expandafter\def\csname LT8\endcsname{\color[rgb]{0.5,0.5,0.5}}%
    \else
      % gray
      \def\colorrgb#1{\color{black}}%
      \def\colorgray#1{\color[gray]{#1}}%
      \expandafter\def\csname LTw\endcsname{\color{white}}%
      \expandafter\def\csname LTb\endcsname{\color{black}}%
      \expandafter\def\csname LTa\endcsname{\color{black}}%
      \expandafter\def\csname LT0\endcsname{\color{black}}%
      \expandafter\def\csname LT1\endcsname{\color{black}}%
      \expandafter\def\csname LT2\endcsname{\color{black}}%
      \expandafter\def\csname LT3\endcsname{\color{black}}%
      \expandafter\def\csname LT4\endcsname{\color{black}}%
      \expandafter\def\csname LT5\endcsname{\color{black}}%
      \expandafter\def\csname LT6\endcsname{\color{black}}%
      \expandafter\def\csname LT7\endcsname{\color{black}}%
      \expandafter\def\csname LT8\endcsname{\color{black}}%
    \fi
  \fi
  \setlength{\unitlength}{0.0500bp}%
  \begin{picture}(4384.00,4533.33)%
    \gplgaddtomacro\gplbacktext{%
      \csname LTb\endcsname%
      \put(264,550){\makebox(0,0)[r]{\strut{} 0}}%
      \put(264,993){\makebox(0,0)[r]{\strut{} 10}}%
      \put(264,1436){\makebox(0,0)[r]{\strut{} 20}}%
      \put(264,1878){\makebox(0,0)[r]{\strut{} 30}}%
      \put(264,2321){\makebox(0,0)[r]{\strut{} 40}}%
      \put(264,2764){\makebox(0,0)[r]{\strut{} 50}}%
      \put(264,3207){\makebox(0,0)[r]{\strut{} 60}}%
      \put(264,3649){\makebox(0,0)[r]{\strut{} 70}}%
      \put(264,4092){\makebox(0,0)[r]{\strut{} 80}}%
      \put(396,330){\makebox(0,0){\strut{} 0}}%
      \put(1154,330){\makebox(0,0){\strut{} 3}}%
      \put(1912,330){\makebox(0,0){\strut{} 6}}%
      \put(2670,330){\makebox(0,0){\strut{} 9}}%
      \put(3428,330){\makebox(0,0){\strut{} 12}}%
      \put(4186,330){\makebox(0,0){\strut{} 15}}%
      \put(2291,110){\makebox(0,0){\strut{}Time t}}%
      \put(2291,4312){\makebox(0,0){\strut{}Statenorm evolution $\|y(\cdot)\|_X$}}%
    }%
    \gplgaddtomacro\gplfronttext{%
      \csname LTb\endcsname%
      \put(3199,3919){\makebox(0,0)[r]{\strut{}uncontrolled}}%
      \csname LTb\endcsname%
      \put(3199,3699){\makebox(0,0)[r]{\strut{}relaxed}}%
      \csname LTb\endcsname%
      \put(3199,3479){\makebox(0,0)[r]{\strut{}mixed-integer}}%
    }%
    \gplbacktext
    \put(0,0){\includegraphics{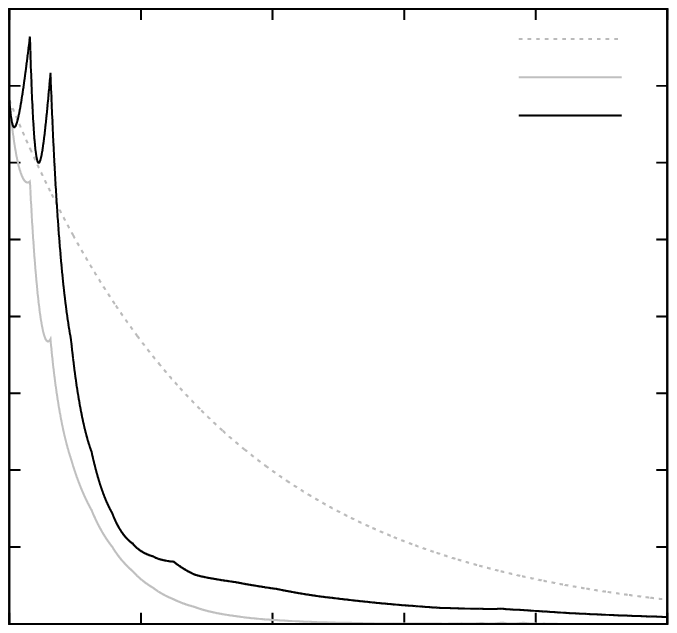}}%
    \gplfronttext
  \end{picture}%
\endgroup
}
\caption{Numerical results for Example~\ref{ex:heatrelax2d}. The upper figures show
the best found integer controls $v_i^*(\cdot)$ and their relaxation $\tilde{\alpha}_i^*(\cdot)$,
from bottom to top, $i=1,\ldots,4$ (left) and $i=5,\ldots,8$ (right). Control $v_9^*(\cdot)$ is 
defined by $v_9^*(t)=1-\sum_{i=1}^8 v_i^*(t)$, $t \in [0,15]$. % and thus not plotted. 
The lower figures show the corresponding ordinary control $u^*(\cdot)$ (left) and the 
evolution of the state norm (right).}
\label{fig:heatrelax2d} 
\end{figure}
\end{example}

\subsection{A semilinear reaction-diffusion system} Let $\Omega$ be a bounded domain in
$\RR^n$ with a smooth boundary $\Gamma$ and consider the classical Lotka-Volterra system with diffusion
\begin{equation}\label{eq:lotkavolterra}
\left\{\begin{aligned}
&\frac{\partial z_1}{\partial t}(x,t)\,{-}\,d_1\sum_{j=1}^n \frac{\partial^2
z_1}{\partial x^2_j}(x,t) = z_1(x,t)(a_1
\,{-}\,b_1v(t)\,{-}\,c_1z_2(x,t))~ \text{in}~Q\\
&\frac{\partial z_2}{\partial t}(x,t)\,{-}\,d_2\sum_{j=1}^n \frac{\partial^2 z_2}{\partial x^2_j}(x,t) =
z_2(x,t)(a_2\,{-}\,b_2v(t)\,{-}\,c_2z_1(x,t))~ \text{in}~Q\\ &
\frac{\partial z_1}{\partial \nu}(x,t)=\frac{\partial z_2}{\partial \nu}(x,t)=0~ \text{on}~\Sigma\\ 
&\vphantom{\frac{\partial  z_1}{\partial \nu}} z_1(x,0)=z_{1,0}(x),~z_2(x,0)=z_{2,0}(x)~ \text{in}~\Omega
\end{aligned}\right.
\end{equation}
with constants $a_i,b_i,c_i,d_i>0$, $i=1,2$, domains $Q=\Omega \times [0,t_f]$, $\Sigma=\Gamma \times [0,t_f]$ 
and control $0 \leq v(t) \leq 1$. System \eqref{eq:lotkavolterra} describes the interaction of two
populations $z_1$ and $z_2$, both spatially distributed and diffusing in
$\Omega$. The initial distribution $z_{1,0},z_{2,0}$ at $t=0$ is assumed to be 
non-negative. The boundary conditions then imply that the populations $z_1$
and $z_2$ are confined in $\Omega$ for all $t\geq 0$. The function $v$ models 
a control of the system and we shall investigate to approximate optimal controls
$v^*(t)$ taking values in $\{0,1\}$ as to minimize the distance of the 
population $(z_1,z_2)$ to its uncontrolled ($v=0$) steady state distribution
$(\bar{z}_1,\bar{z}_2)$ given by the constant functions 
\begin{equation*}
 \bar{z}_1(x)=\frac{a_2}{c_2},~\bar{z}_2(x)=\frac{a_1}{c_1},\quad x\in\Omega. 
\end{equation*}

In order to bring the system into abstract form \eqref{eq:sysuc}, set
$X=L^2(\Omega)\times L^2(\Omega)$, $U=\Uad=\{\}$, $V=\RR$, $\Vad=\{0,1\}$,
define the operator $A\: D(A) \to X$ by
\begin{equation*}
\begin{aligned}
&D(A)=\{(z_1,z_2) \in H^2(\Omega) \times H^2(\Omega) : \frac{\partial
z_1}{\partial \nu}(x,t)=\frac{\partial z_2}{\partial \nu}(x,t)=0,\quad
\text{on}~\Gamma\}\\
&A(z_1,z_2)(x)=(d_1 \sum_{j=1}^n \frac{\partial^2
z_1}{\partial x^2_j}(x),d_2 \sum_{j=1}^n \frac{\partial^2 z_2}{\partial
x^2_j}(x)),~(z_1,z_2) \in D(A),
\end{aligned}
\end{equation*}
define the non-linear function $f\: X \times U \times V = X \times V
\to X$ by
\begin{equation*}
f((z_1,z_2),v)(x)=(z_1(x)(a_1-b_1v-c_1z_2(x)),z_2(x)(a_2-b_2v-c_2z_2(x))
\end{equation*}
and define the cost functions $\phi$ and $\psi$ by
\begin{equation}\label{eq:LotkaVolterraCost}
\phi((z_1,z_2))=0,\quad \psi((z_1,z_2)) = \int_\Omega \|z_1(x)-\bar{z}_1(x)\|^2 +
\|z_2(x)-\bar{z}_2(x)\|^2\,dx.
\end{equation}
We choose $X_{[0,t_f]}=C([0,t_f];X)$ and $V_{[0,t_f]}=PC(0,t_f;\RR)$.

It is well-known, that $(A,D(A))$ is the generator of an analytic
semigroup on $X$ and that for any non-negative initial data
$z_{1,0},z_{1,0} \in X$, the system \eqref{eq:lotkavolterra} has a non-negative
unique mild solution $(z_1,z_2) \in C([0,t_f],X \times X)$ for every $v \in
L^\infty(0,t_f;[0,1])$. Moreover, for data $z_{1,0},z_{1,0} \in D(A)$
and $v \in C^{0,\vartheta}_{\pw}(0,t_f;[0,1])$, $\vartheta>0$, this solution is classical and satisfies 
\begin{equation}\label{eq:regy}
 (z_1,z_2) \in C([0,t_f];D(A) \times D(A)) \cap H^1([0,t_f]; X \times X).
\end{equation}
Existence (local in time) and uniqueness follows from classical theory for semilinear parabolic 
equations, see, e.\,g., \cite[Chapter 6]{Pazy1983}. Global existence results for
\eqref{eq:lotkavolterra} are obtained by a-priori bounds on the solution using
contracting rectangles \cite{Brown1980}.

Assume that the initial data satisfies $z_{1,0},z_{1,0} \in D(A)$ and that $[\alpha^k,y^k]$
is a sequence of feasible solutions to the corresponding relaxed problem 
\eqref{eq:relaxedconvexifiedproblem} in a bounded set $\Scal \subset \tilde{V}_{[0,t_f]} \times X_{[0,t_f]}$. 
We want to discuss again the assumptions of Proposition~\ref{prop:convergenceWeaker}.

Hypothesis (H$_1$) holds by the same arguments as in the previous example. Moreover, we claim that 
hypothesis {\normalfont (H$_2$)} holds. Let $\bar{M}$ be the growth bound of
$\{T(t)\}_{t \geq 0}$ on $[0,t_f]$.
By analyticity of $\{T(t)\}_{t \geq 0}$, we have for every feasible 
$y \in X_{[0,t_f]}$ that $s$ almost everywhere in $(0,t_f)$,
\begin{equation}\label{eq:nonlinearest}
\frac{d}{ds}T(t-s)f(y(s),v^i) = -A T(t-s)f(y(s),v^i) +
T(t-s)f_y(y(s),v^i)y_s(s),
\end{equation}
where $f_y=\frac{d}{dy}f$ and $y_s=\frac{d}{ds}y$. Using that $y^k(s) \in D(A)
\times D(A)$ for all $s \in [0,t_f]$ and $f\: D(A) \to D(A)$, 
we see that
\begin{equation}
\|-A T(t-s)f(y^k(s),v^i)\|_X \leq \|T(t-s)\|_{\Lcal(X)}\|Af(y^k(s),v^i)\|_X \leq C_1^{k,i}
\end{equation}
for some constants $C_1^{k,i}$. Using that $f$ is a smooth function, we see that
\begin{equation}
\|T(t-s)f_y(y^k(s),v^i)y^k_s(s)\|_X \leq \bar{M}\|f_y(y^k(s),v^i)\|_X\|y^k_s(s)\|_X \leq C_2^{k,i}.
\end{equation}
Thus \eqref{eq:nonlinearest} yields the estimate
\begin{equation}\label{eq:H2estex2}
\left\|\frac{d}{ds}T(t-s)f(y(s),v^i)\right\|_X \leq C_1^{k,i} + C_2^{k,i}
\end{equation}
for $s \in [0,t_f]$ a.\,e. By well-posedness of the problem \eqref{eq:lotkavolterra} for every $v \in V_{[0,t_f]}$
and using the boundedness of $\Scal$, we get an estimate
\begin{equation}\label{eq:fboundexp2}
\sup_{t \in [0,t_f]} \|f(t,y^k(t),v^i)\|_X \leq M^{i},
\end{equation} 
for some constants $M^i$ verifying hypothesis {\normalfont (H$_3$)}. Further, using (H$_1$) and the boundedness of $\Scal$,
$\|f_y(y^k(s),v^i)\|_X$ and $\|Af(y^k(s),v^i)\|_X$ can be bounded independently of $k$. 
Using the state equation \eqref{eq:relaxedconvexifiedproblemB}, we get
\begin{equation}
\|y^k_s(s)\|_X \leq \|A y^k(s)\|_X + \|f(y^k(s),v^i)\|_X \leq \|y^k(s)\|_{D(A)} + M^{i}
\end{equation}
for a.\,e. $s \in [0,t_f]$. So, boundedness of $S$ and the regularity of $y$ in \eqref{eq:regy} implies 
that $\|y^k_s(s)\|_X$ can be bounded independently of $k$ for a.\,e. $s \in [0,t_f]$. Hence, the constants
$C^{k,i}_1$ and $C^{k,i}_2$ in \eqref{eq:H2estex2} can be chosen independently of $k$ and we can conclude from 
Proposition~\ref{prop:convergenceWeaker} that the relaxation methods terminates after $\kappa$ steps
with an integer solution $[v^*,z^*]$ satisfying the estimate
\begin{equation}
|J(\alpha^\kappa,y^\kappa) - J(v^*,z^*)| \leq \varepsilon,  
\end{equation}
for any sequences $\varepsilon^k \to 0$, $\Delta t^k \to 0$ and every $\varepsilon>0$.

\begin{table}[t]
\renewcommand*\arraystretch{1.5}
\footnotesize
 \begin{longtable*}{|c|c|c|c|c|} 
\hline
k
&
$\Delta t_{\max}$
&
Rel. Cost \footnotesize $J^k_\rel=J(\alpha^*,y^*)$
&
\footnotesize
$J^k=J(v^*,z^*)$
&
\footnotesize 
Error $({J^2_\rel})^{-1} |J^2_\rel-J^k|$ 
\\
\hline
 0 & 2.0000 & 7.066392E+01 & 8.287875E+01  & 0.4065 \\
 1 & 1.0000 & 5.978818E+01 & 6.958250E+01  & 0.1809 \\
 2 & 0.5000 & 5.892414E+01 & 5.875641E+01  & 0.0028 \\
\hline
 \end{longtable*}
\normalsize \mbox{}
\caption{Performance of the relaxation method for Example~\ref{ex:lotkadiff2d}.}
\label{table:lotkadiff2d}
\end{table}

\begin{example}\label{ex:lotkadiff2d} We applied the relaxation method 
to a semilinear test problem of the form \eqref{eq:lotkavolterra} again
for a two dimensional domain $\Omega$ with the following parameters. Let $\Omega$ being a circle with radius $1$ centered at $(1,1)$ and choose $a_1=a_2=c_1=c_2=1$,
$b_1=\frac{7}{10}$, $b_2=\frac{1}{2}$, $d_1=0.05$, $d_2=0.01$, initial data $z_{1,0},~z_{2,0} \in D(A)$ approximated by
$\tilde{z}_{1,0}(x)=\frac{1}{2} d_{\frac{1}{2}}(x-1)$,
$\tilde{z}_{2,0}(x)=\frac{7}{10} d_{\frac{1}{2}}(x-1)$, 
where $d_\epsilon(x)$ given by
\begin{equation}
d_\epsilon(x)=\frac{1}{\sqrt{2 \pi \epsilon}}e^{\frac{-x^2}{2\epsilon}}
\end{equation}
models a population concentrated at the origin. For $v(t)=0$, $t \geq 0$, the
solution $z_1(t,x)$, $z_2(t,x)$ converges asymptotically to a spatially constant 
and temporarily non-constant, periodic solution.

The computations for the optimal control are made by the same numerical method as in the 
previous example, but using a grid with 258 finite elements. For the performance of the relaxation 
method see Table~\ref{table:lotkadiff2d}. The best found controls and the evolution of the state norm of the corresponding solutions are displayed 
in Figure~\ref{fig:lotkadiff2d}. Again we see a decrease of the integer-approximation error 
in accordance with Proposition~\ref{prop:convergenceWeaker}. The best found integer control yields
a cost of 58.76.

\begin{figure}[t]
%\centering
\scalebox{0.73}{
% GNUPLOT: LaTeX picture with Postscript
\renewcommand{\gplbacktext}{}
\renewcommand{\gplfronttext}{}
\begingroup
  \ifGPblacktext
    % no textcolor at all
    \def\colorrgb#1{}%
    \def\colorgray#1{}%
  \else
    % gray or color?
    \ifGPcolor
      \def\colorrgb#1{\color[rgb]{#1}}%
      \def\colorgray#1{\color[gray]{#1}}%
      \expandafter\def\csname LTw\endcsname{\color{white}}%
      \expandafter\def\csname LTb\endcsname{\color{black}}%
      \expandafter\def\csname LTa\endcsname{\color{black}}%
      \expandafter\def\csname LT0\endcsname{\color[rgb]{1,0,0}}%
      \expandafter\def\csname LT1\endcsname{\color[rgb]{0,1,0}}%
      \expandafter\def\csname LT2\endcsname{\color[rgb]{0,0,1}}%
      \expandafter\def\csname LT3\endcsname{\color[rgb]{1,0,1}}%
      \expandafter\def\csname LT4\endcsname{\color[rgb]{0,1,1}}%
      \expandafter\def\csname LT5\endcsname{\color[rgb]{1,1,0}}%
      \expandafter\def\csname LT6\endcsname{\color[rgb]{0,0,0}}%
      \expandafter\def\csname LT7\endcsname{\color[rgb]{1,0.3,0}}%
      \expandafter\def\csname LT8\endcsname{\color[rgb]{0.5,0.5,0.5}}%
    \else
      % gray
      \def\colorrgb#1{\color{black}}%
      \def\colorgray#1{\color[gray]{#1}}%
      \expandafter\def\csname LTw\endcsname{\color{white}}%
      \expandafter\def\csname LTb\endcsname{\color{black}}%
      \expandafter\def\csname LTa\endcsname{\color{black}}%
      \expandafter\def\csname LT0\endcsname{\color{black}}%
      \expandafter\def\csname LT1\endcsname{\color{black}}%
      \expandafter\def\csname LT2\endcsname{\color{black}}%
      \expandafter\def\csname LT3\endcsname{\color{black}}%
      \expandafter\def\csname LT4\endcsname{\color{black}}%
      \expandafter\def\csname LT5\endcsname{\color{black}}%
      \expandafter\def\csname LT6\endcsname{\color{black}}%
      \expandafter\def\csname LT7\endcsname{\color{black}}%
      \expandafter\def\csname LT8\endcsname{\color{black}}%
    \fi
  \fi
  \setlength{\unitlength}{0.0500bp}%
  \begin{picture}(4384.00,4533.33)%
    \gplgaddtomacro\gplbacktext{%
      \csname LTb\endcsname%
      \put(264,803){\makebox(0,0)[r]{\strut{} 0}}%
      \put(264,1309){\makebox(0,0)[r]{\strut{} 0.2}}%
      \put(264,1815){\makebox(0,0)[r]{\strut{} 0.4}}%
      \put(264,2321){\makebox(0,0)[r]{\strut{} 0.6}}%
      \put(264,2827){\makebox(0,0)[r]{\strut{} 0.8}}%
      \put(264,3333){\makebox(0,0)[r]{\strut{} 1}}%
      \put(264,3839){\makebox(0,0)[r]{\strut{} 1.2}}%
      \put(396,330){\makebox(0,0){\strut{} 0}}%
      \put(1154,330){\makebox(0,0){\strut{} 3}}%
      \put(1911,330){\makebox(0,0){\strut{} 6}}%
      \put(2669,330){\makebox(0,0){\strut{} 9}}%
      \put(3426,330){\makebox(0,0){\strut{} 12}}%
      \put(4184,330){\makebox(0,0){\strut{} 15}}%
      \put(2290,110){\makebox(0,0){\strut{}Time t}}%
      \put(2290,4422){\makebox(0,0){\strut{}Relaxed and integer control}}%
    }%
    \gplgaddtomacro\gplfronttext{%
      \csname LTb\endcsname%
      \put(3197,3919){\makebox(0,0)[r]{\strut{}relaxed}}%
      \csname LTb\endcsname%
      \put(3197,3699){\makebox(0,0)[r]{\strut{}integer}}%
    }%
    \gplbacktext{}
    \put(0,0){\includegraphics{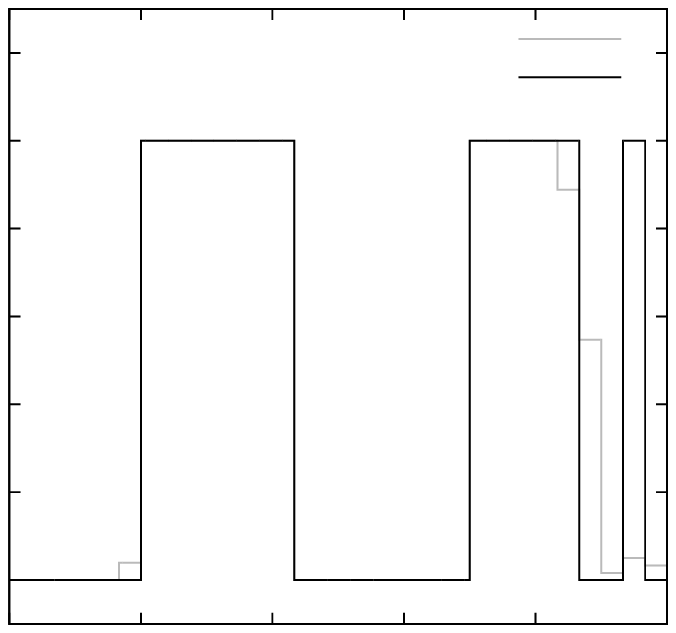}}%
    \gplfronttext{}
  \end{picture}%
\endgroup
}\scalebox{0.73}{
% GNUPLOT: LaTeX picture with Postscript
\renewcommand{\gplbacktext}{}
\renewcommand{\gplfronttext}{}
\begingroup
  \ifGPblacktext
    % no textcolor at all
    \def\colorrgb#1{}%
    \def\colorgray#1{}%
  \else
    % gray or color?
    \ifGPcolor
      \def\colorrgb#1{\color[rgb]{#1}}%
      \def\colorgray#1{\color[gray]{#1}}%
      \expandafter\def\csname LTw\endcsname{\color{white}}%
      \expandafter\def\csname LTb\endcsname{\color{black}}%
      \expandafter\def\csname LTa\endcsname{\color{black}}%
      \expandafter\def\csname LT0\endcsname{\color[rgb]{1,0,0}}%
      \expandafter\def\csname LT1\endcsname{\color[rgb]{0,1,0}}%
      \expandafter\def\csname LT2\endcsname{\color[rgb]{0,0,1}}%
      \expandafter\def\csname LT3\endcsname{\color[rgb]{1,0,1}}%
      \expandafter\def\csname LT4\endcsname{\color[rgb]{0,1,1}}%
      \expandafter\def\csname LT5\endcsname{\color[rgb]{1,1,0}}%
      \expandafter\def\csname LT6\endcsname{\color[rgb]{0,0,0}}%
      \expandafter\def\csname LT7\endcsname{\color[rgb]{1,0.3,0}}%
      \expandafter\def\csname LT8\endcsname{\color[rgb]{0.5,0.5,0.5}}%
    \else
      % gray
      \def\colorrgb#1{\color{black}}%
      \def\colorgray#1{\color[gray]{#1}}%
      \expandafter\def\csname LTw\endcsname{\color{white}}%
      \expandafter\def\csname LTb\endcsname{\color{black}}%
      \expandafter\def\csname LTa\endcsname{\color{black}}%
      \expandafter\def\csname LT0\endcsname{\color{black}}%
      \expandafter\def\csname LT1\endcsname{\color{black}}%
      \expandafter\def\csname LT2\endcsname{\color{black}}%
      \expandafter\def\csname LT3\endcsname{\color{black}}%
      \expandafter\def\csname LT4\endcsname{\color{black}}%
      \expandafter\def\csname LT5\endcsname{\color{black}}%
      \expandafter\def\csname LT6\endcsname{\color{black}}%
      \expandafter\def\csname LT7\endcsname{\color{black}}%
      \expandafter\def\csname LT8\endcsname{\color{black}}%
    \fi
  \fi
  \setlength{\unitlength}{0.0500bp}%
  \begin{picture}(4384.00,4533.33)%
    \gplgaddtomacro\gplbacktext{%
      \csname LTb\endcsname%
      \put(264,550){\makebox(0,0)[r]{\strut{} 0}}%
      \put(264,993){\makebox(0,0)[r]{\strut{} 1}}%
      \put(264,1436){\makebox(0,0)[r]{\strut{} 2}}%
      \put(264,1878){\makebox(0,0)[r]{\strut{} 3}}%
      \put(264,2321){\makebox(0,0)[r]{\strut{} 4}}%
      \put(264,2764){\makebox(0,0)[r]{\strut{} 5}}%
      \put(264,3207){\makebox(0,0)[r]{\strut{} 6}}%
      \put(264,3649){\makebox(0,0)[r]{\strut{} 7}}%
      \put(264,4092){\makebox(0,0)[r]{\strut{} 8}}%
      \put(396,330){\makebox(0,0){\strut{} 0}}%
      \put(1154,330){\makebox(0,0){\strut{} 3}}%
      \put(1911,330){\makebox(0,0){\strut{} 6}}%
      \put(2669,330){\makebox(0,0){\strut{} 9}}%
      \put(3426,330){\makebox(0,0){\strut{} 12}}%
      \put(4184,330){\makebox(0,0){\strut{} 15}}%
      \put(2290,110){\makebox(0,0){\strut{}Time t}}%
      \put(2290,4422){\makebox(0,0){\strut{}Statenorm evolution}}%
    }%
    \gplgaddtomacro\gplfronttext{%
      \csname LTb\endcsname%
      \put(3197,3919){\makebox(0,0)[r]{\strut{}$\|y_1(\cdot)\|_X$ (relaxed)}}%
      \csname LTb\endcsname%
      \put(3197,3699){\makebox(0,0)[r]{\strut{}$\|y_2(\cdot)\|_X$ (relaxed)}}%
      \csname LTb\endcsname%
      \put(3197,3479){\makebox(0,0)[r]{\strut{}$\|y_1(\cdot)\|_X$ (integer)}}%
      \csname LTb\endcsname%
      \put(3197,3259){\makebox(0,0)[r]{\strut{}$\|y_2(\cdot)\|_X$ (integer)}}%
    }%
    \gplbacktext{}
    \put(0,0){\includegraphics{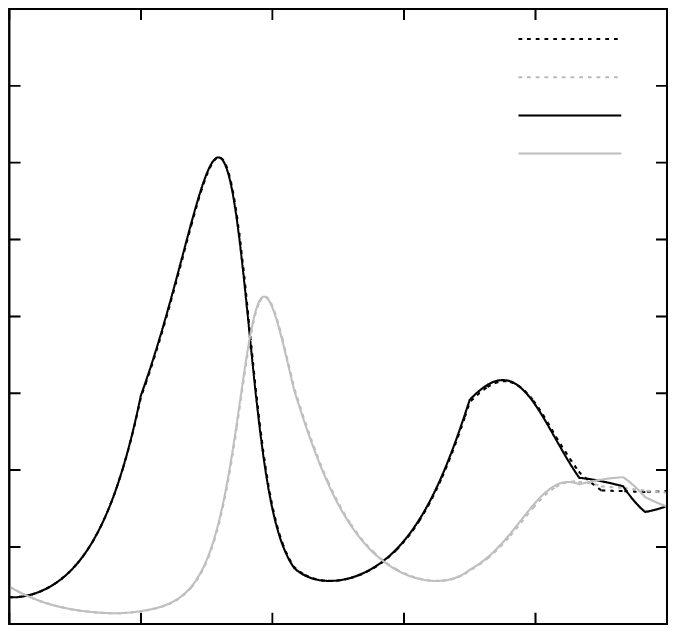}}%
    \gplfronttext{}
  \end{picture}%
\endgroup
}
\caption{Numerical results for Example~\ref{ex:lotkadiff2d}. The left figure shows the best found 
relaxed and integer control $\alpha^*(\cdot)$, $v^*(\cdot)$ and the right figure shows the corresponding 
evolutions of the populations $y_1(\cdot)$, $y_2(\cdot)$.}
\label{fig:lotkadiff2d} 
\end{figure}
\end{example}
%%%%%%%%%%%%%%%%%%%%%%%%%%%%%%%%%%%%%%%%%%%%%%%%%%%%%%%%%%%%%%%%%%%%%%%%%%%%%%%%

%%%%%%%%%%%%%%%%%%%%%%%%%%%%%%%%%%%%%%%%%%%%%%%%%%%%%%%%%%%%%%%%%%%%%%%%%%%%%%%%
\section{Conclusions and Open Problems}\label{sec:final}
We considered mixed-integer optimal control problems for abstract semilinear 
evolution equations and obtained conditions guaranteeing that the 
value function and the state of a relaxed optimal control problem can be 
approximated with arbitrary precision using a control that satisfies integer 
restrictions. In particular, our approach is constructive and gives rise to a 
numerical method for mixed-integer optimal control problems with certain partial
differential equations. Moreover, we showed how these conditions imply 
a-priori estimates on the quality of the solution when combinatorial 
constraints are enforced.  

We note that we did not discuss convergence of the constructed sequence of integer controls
approximating the optimal value of the relaxed problem.
It is not even clear in which topology such a convergence would be meaningful. Several
issues related to this questions, in particular in a PDE-context, is discussed in 
\cite{HanteLeugeringSeidman2009,HanteLeugeringSeidman2010} and \cite{FalkHanteDissertation2010}.

Compared to the previously available results on mixed-integer optimal control problems with
ordinary differential equations in \cite{SagerBockReinelt2009,SagerBockDiehl2011}, the setting 
treated in this paper involves a differential operator $A$, taken to be a generator 
of a strongly continuous semigroup. This requires careful regularity considerations.
When $A$ is a Laplace operator, we showed on a linear and a semilinear example
how such regularity assumptions can be met and provided numerical examples
demonstrating the practicability of the approach.

It is clear that the methodology considered in this paper generalizes to the case 
when the generator $A$ of a strongly continuous semigroup is replaced by a family
$\{A(t)\}_{t \in [0,t_f]}$ of unbounded linear operators generating an evolution
operator in the sense of \cite{Krein1971}. On the other hand it is not so clear how 
to extend the results in case of unbounded control action, for example, Neumann or 
Dirichlet boundary control for the heat equation. Recalling the density of solutions 
to \eqref{eq:di} in the set of solution to \eqref{eq:diconvex} 
which motivated our approach, we note that the case of unbounded control is not
covered by the available results on operator differential inclusions. While in 
principle semigroup techniques can deal with unbounded control operators, 
see for example the exposition in \cite[Chapter 3]{BensoussanDaPratoDelfourMitter1992}, 
this extension is non-trivial and requires additional work.

% -------------------------------------
\section*{Acknowledgements}
{\footnotesize
Most of this research was carried out while both authors were member of the working group of Prof. H.G. Bock at the 
Interdisciplinary Center of Scientific Computing (IWR), University of Heidelberg. The financial support of the 
Mathematics Center Heidelberg (MATCH), of the Heidelberg Graduate School of Mathematical and Computational Methods 
for the Sciences (HGS MathComp), and of the EU project EMBOCON under grant FP7-ICT-2009-4 248940 is gratefully acknowledged. The
first author also acknowledges the support from Andreas Potschka with the software package MUSCOD-II.
}

%%%%%%%%%%%%%%%%%%%%%%%%%%%%%%%%%%%%%%%%%%%%%%%%%%%%%%%%%%%%%%%%%%%%%%%%%%%%%%%%
%\bibliographystyle{plain}
%\bibliographystyle{spmpsci}
%\bibliography{ref}
%\input{HanteSager_R1.bbl}

%%%%%%%%%%%%%%%%%%%%%%%%%%%%%%%%%%%%%%%%%%%%%%%%%%%%%%%%%%%%%%%%%%%%%%%%%%%%%%%%

\end{document}